\documentclass[12pt]{article}
\usepackage{graphicx}
\usepackage{amsmath,amssymb,amsthm,amsfonts}
\newtheorem{thm}{Theorem}[section]
\newtheorem{cor}[thm]{Corollary}
\newtheorem{lem}[thm]{Lemma}
\newtheorem{prop}[thm]{Proposition}

\setlength{\topmargin}{-0.65in}
\setlength{\textheight}{8.5in}
\setlength{\textwidth}{6.0in}
\setlength{\oddsidemargin}{-0.01in}

\newcommand{\R}{{\mathbb{R}}}
\newcommand{\Z}{{\mathbb{Z}}}

\newcommand{\1}{\partial}
\newcommand{\2}{\overline}
\newcommand{\3}{\varepsilon}
\newcommand{\4}{\widetilde}

\def\ni{\noindent}

\baselineskip=16pt

\begin{document}
\title{Collapsing behaviour of a singular diffusion 
equation}
\author{Kin Ming Hui\\
%\thanks{ }\\
Institute of Mathematics, Academia Sinica,\\
Nankang, Taipei, 11529, Taiwan, R. O. C.}
\date{May 27, 2011}
\smallbreak \maketitle
\begin{abstract}
Let $0\le u_0(x)\in L^1(\R^2)\cap L^{\infty}(\R^2)$ be such that $u_0(x)
=u_0(|x|)$ for all $|x|\ge r_1$ and is monotone decreasing 
for all $|x|\ge r_1$ for some constant $r_1>0$ and 
$\mbox{ess}\inf_{\2{B}_{r_1}(0)}u_0\ge\mbox{ess}
\sup_{\R^2\setminus B_{r_2}(0)}u_0$ for some constant 
$r_2>r_1$. Then under some mild decay conditions at infinity on
the initial value $u_0$ we will extend the result of P.~Daskalopoulos,
M.A.~del Pino and N.~Sesum \cite{DP2}, \cite{DS2}, and prove the 
collapsing behaviour of the maximal solution of the 
equation $u_t=\Delta\log u$ in $\R^2\times (0,T)$, $u(x,0)=u_0(x)$ 
in $\R^2$, near its extinction time $T=\int_{R^2}u_0dx/4\pi$
without using the Hamilton-Yau Harnack inequality.
\end{abstract}

\vskip 0.2truein

Key words: collapsing behaviour, maximal solution, 
singular diffusion 
equation\\
\vskip -0.2truein
AMS Mathematics Subject Classification: Primary 35B40 Secondary 35K57, 
35K65

\vskip 0.2truein
\setcounter{equation}{0}
\setcounter{section}{-1}

\section{Introduction}
\setcounter{equation}{0}
\setcounter{thm}{0}

Recently there is a lot of study on the equation,
\begin{equation}
\left\{\begin{aligned}
&u_t=\Delta\log u, u>0,\quad\mbox{ in }\R^2\times (0,T)\\
&u(x,0)=u_0\qquad\qquad\,\,\mbox{ in }\R^2
\end{aligned}\right.
\end{equation}
by P.~Daskalopoulos, M.A.~del Pino and N.~Sesum \cite{DP1}, \cite{DP2}, 
\cite{DS1}, \cite{DS2}, S.Y.~Hsu \cite{Hs1}, \cite{Hs2}, \cite{Hs3}, \cite{Hs4}, J.R.~Esteban, A.~Rodriguez and J.L.~Vazquez \cite{ERV1}, \cite{ERV2}, K.M.~Hui \cite{Hu1}, \cite{Hu2}, and L.F.~Wu \cite{W1}, 
\cite{W2}, etc. Equation (0.1) 
arises in the model of thin films of fluid speading on a solid surface 
\cite{G} when the fourth order term is neglected. It also arises in the 
study of the Ricci flow on surfaces \cite{W1}, \cite{W2}, and as the 
singular limit of the porous medium equation, 
\begin{equation*}
\left\{\begin{aligned}
&u_t=\Delta\left(\frac{u^m}{m}\right), u>0,\quad\mbox{ in }\R^2\times (0,T)\\
&u(x,0)=u_0(x)\qquad\qquad\,\,\,\mbox{in }\R^2
\end{aligned}\right.
\end{equation*}
as $m\to 0$ \cite{Hu2}, \cite{ERV2}. In \cite{DP1} and \cite{Hu1}
P.~Daskalopoulos and M.A.~del Pino and K.M.~Hui proved that
corresponding to each $0\le u_0\in L^p(\R^2)\cap L^1(\R^2)$, $p>1$, 
and $2\le f\in L^1(0,\infty)$ there exists a classical solution $u$ of (0.1) 
in $\R^n\times (0,T)$ satisfying the mass loss equation,
\begin{equation}
\int_{\R^2}u(x,t)\,dx=\int_{\R^2}u_0\,dx-2\pi\int_0^tf(s)\,ds\quad\forall
0\le t<T
\end{equation}
where $T>0$ given by
\begin{equation}
\int_{\R^2}u_0\,dx=2\pi\int_0^Tf(s)\,ds
\end{equation}
is the extinction time for the solution $u$. Note that the maximal solution
of (0.1) is the solution of (0.1) that satisfies (0.2) with $f\equiv 2$.
When $f\equiv\gamma>2$ is a 
constant, the asymptotic behaviour of the solution $u$ is studied
by S.Y.~Hsu in \cite{Hs3} and \cite{Hs4}. In \cite{Hs3} S.Y.~Hsu proved 
that if $u_0$ is radially symmetric and monotone decreasing and $u$ is the 
solution of (0.1) in $\R^2\times (0,T)$ that satisfies (0.2), (0.3), with
$f\equiv 4$, then the rescaled solution
\begin{equation*}
v(x,s)=\frac{u(x,t)}{T-t},\quad, s=-\log (T-t)
\end{equation*}
will converge uniformly on every compact subsets of $\R^2$ to the function
$$
\frac{8\lambda}{(\lambda +|x|^2)^2}
$$ 
as $s\to\infty$ for some constant $\lambda>0$. For the general case 
$f\equiv\gamma>2$ S.Y.~Hsu \cite{Hs4} proved 
that if $u_0$ is radially symmetric and 
monotone decreasing and $u$ is the solution of (0.1) in $\R^2\times (0,T)$ 
that satisfies (0.2) and (0.3) with $f\equiv\gamma$
and
\begin{equation*}
\lim_{r\to\infty}\frac{ru_r(r,t)}{u(r,t)}=-\gamma\quad\mbox{ uniformly 
on $[a,b]$}\quad\forall 0<a<b<T
\end{equation*}
where $T$ is given by (0.3) with $f\equiv\gamma$,
then there exist unique constants $\alpha>0$, $\beta>-1/2$, 
$\alpha =2\beta +1$, depending only on $\gamma$ such that the 
rescaled function
$$
v(y,s)=\frac{u(y/(T-t)^{\beta},t)}{(T-t)^{\alpha}}
$$
where
\begin{equation*}
s=-\text{log }(T-t)
\end{equation*}
will converge uniformly on every compact subset of $\R^2$ to
$\phi_{\lambda ,\beta}(y)$ for some constant $\lambda>0$ as 
$s\to\infty$ where $\phi_{\lambda ,\beta}(y)
=\phi_{\lambda ,\beta}(|y|)$ is radially symmetric and satisfies
the following O.D.E.
\begin{equation*}
\frac{1}{r}\biggl (\frac{r\phi '}{\phi}\biggr )'+\alpha\phi 
+\beta r\phi '=0\quad\text{ in }(0,\infty)
\end{equation*}
with
$$
\phi (0)=1/\lambda,\phi'(0)=0.
$$
However as observed by J.R.~King \cite{K} using formal asymptotic
analysis as $t$ approaches $T$ the vanishing behaviour for the finite 
mass solution 
of (0.1) which corresponds to the case $f\equiv 2$ is very different
from the vanishing behaviour for the case $f\equiv\gamma>2$. 
J.R.~King find that for compactly supportly finite mass initial 
value the maximal solution of (0.1) behaves like
\begin{equation*}
\frac{(T-t)^2}{\frac{T}{2}|x|^2+e^{\frac{2T}{(T-t)}}}
\end{equation*}
in the inner region $(T-t)\log |x|\le T$ and behaves like 
\begin{equation*}
\frac{2t}{|x|^2(\log |x|)^2}
\end{equation*}
in the outer region $(T-t)\log |x|\ge T$ as $t\nearrow T$. In \cite{DP2}
P.~Daskalopoulos and M.A.~del Pino give a rigorous proof of an extension
of this formal result for radially symmetric initial value $u_0(r)$ 
satisfying the conditions,
\begin{equation}
\mbox{$u_0(x)=u_0(|x|)$ is decreasing on $r=|x|\ge r_1$ for 
some constant $r_1>0$}
\end{equation}
\begin{equation}
u_0(x)=\frac{2\mu}{|x|^2(\log |x|)^2}(1+o(1))\quad\mbox{ as }
|x|\to\infty,
\end{equation}
for some constant $\mu>0$ and 
\begin{equation}
R_0(x):=-\frac{\Delta\log u_0}{u_0}\ge -\frac{1}{\mu}\quad\mbox{ on }
\R^2.
\end{equation}
Note that (0.1) is equivalent to the Ricci flow equation (\cite{W2}) 
$$
\frac{\1}{\1 t}g_{ij}=-2R_{ij}\quad\mbox{ in }\R^2\times (0,T)
$$
where $g_{ij}(t)=g_{ij}(x,t)=u(x,t)\delta_{ij}$ is an evolving metric on 
$\R^2$ and $R_{ij}(x,t)$ is the Ricci curvature of the metric $g_{ij}(t)$.
Then the scalar curvature $R(x,t)$ with respect to the metric 
$g_{ij}(x,t)$ is given by
$$
R(x,t)=-\frac{\Delta\log u}{u}.
$$ 
 Thus (0.6) says that the initial scalar curvature 
is greater than or equal to $-1/\mu$ on $\R^2$.

In \cite{DS2} P.~Daskalopoulos and N.~Sesum extend the result  
to the case of compactly supported $0\le u_0\in L^1(\R^2)\cap 
L^{\infty}(\R^2)$. However their proof of the behaviour of the maximal 
solution in the outer region near the extinction time is very difficult 
and uses the Hamilton-Yau Harnack inequality \cite{HY}. 
    
In this paper we will extend their result to the case of initial value
$0\le u_0\in L^1(\R^2)\cap L^{\infty}(\R^2)$ that satisfies 
(0.4), (0.5), and (0.6) for some constant $\mu\ge 0$ with
the right hand side being replaced by $-\infty$ if $\mu=0$ and 
\begin{equation}
\mbox{ess}\inf_{\2{B}_{r_1}(0)}u_0
\ge\mbox{ess}\sup_{\R^2\setminus B_{r_2}(0)}u_0
\end{equation}
for some constant $r_2>r_1$. Note that (0.7) is automatically satisfied
if $u_0$ has compact support in $\R^2$. We will prove the behaviour 
of the maximal 
solution in the outer region near the extinction time by elementary
method without using the difficult Hamilton-Yau Harnack inequality 
for surfaces \cite{HY}. We will also prove the behaviour of the 
maximal solution in the inner region as the extinction time is approached.

In this paper we will assume that the initial value $u_0$ satisfies
$0\le u_0\in L^1(\R^2)\cap L^{\infty}(\R^2)$, (0.4), (0.5), (0.6) and 
(0.7) for some constants $r_2>r_1>0$ and $\mu\ge 0$ 
with the right hand side being replaced by $-\infty$ if $\mu=0$. 
We will assume that $u$ is the maximal solution of (0.1) in 
$\R^2\times (0,T)$ that satisfies (0.2) with $f\equiv 2$ and
$$
T=\frac{1}{4\pi}\int_{\R^2}u_0\,dx
$$
for the rest of the paper. For any $x_0\in\R^2$ and $r_0>0$ let
$B_{r_0}(x_0)=\{x\in\R^2:|x-x_0|<r_0\}$ and $B_{r_0}=B_{r_0}(0)$.
Note that by \cite{Hu1} $u$ satisfies the  
Aronson-B\'enilan inequality,
\begin{equation}
\frac{u_t}{u}\le\frac{1}{t}\quad\mbox{ in }\R^2\times (0,T).
\end{equation}

The plan of the paper is as follows. In section 1 we will prove 
the behaviour of the maximal solution in the inner region. In section 
two we will prove the behaviour of the maximal solution in the outer 
region.

\section{Inner region behaviour}
\setcounter{equation}{0}
\setcounter{thm}{0}

In this section we will prove the behaviour of the maximal solution in 
the inner region near the extinction time using a modification of the
argument of \cite{DP2} and \cite{DS2}.

\begin{lem}
The solution $u$ satisfies
\begin{equation*}
u(x,t)\ge u(y,t)
\end{equation*}
for any $t\in (0,T)$ and $x,y\in\R^2$ such that $|y|\ge |x|+2r_2$.
\end{lem}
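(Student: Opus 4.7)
The natural approach is a reflection/comparison argument. Given $x, y \in \R^2$ with $|y| \ge |x| + 2r_2$, set $e = (y-x)/|y-x|$, $m = (x+y)/2$, let $P = \{\xi : (\xi - m)\cdot e = 0\}$ be the perpendicular bisector of the segment from $x$ to $y$, and write $R$ for the reflection of $\R^2$ across $P$, so that $Rx = y$. Let $H = \{\xi : (\xi - m)\cdot e \le 0\}$ be the closed half-plane containing $x$. Since $\Delta\log$ is invariant under isometries of $\R^2$, the function $\widetilde u(\xi, t) := u(R\xi, t)$ also solves (0.1) with initial datum $u_0\circ R$. The plan is to establish the initial ordering $u_0(\xi) \ge u_0(R\xi)$ for a.e.\ $\xi \in H$, then apply the parabolic comparison principle to $u$ and $\widetilde u$ on $H \times (0, T)$---noting $u = \widetilde u$ on $P \times (0, T)$---to conclude $u \ge \widetilde u$ on $H \times (0, T)$. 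Evaluating at $\xi = x$ then yields $u(x, t) \ge u(Rx, t) = u(y, t)$.

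The key geometric estimate is
$$m \cdot e \;=\; \frac{|y|^2 - |x|^2}{2|y - x|} \;\ge\; \frac{|y| - |x|}{2} \;\ge\; r_2,$$
which shows that $\2{B}_{r_1}(0) \subset H$ and that its reflection $R(\2{B}_{r_1}(0))$ is the closed disc of radius $r_1$ centred at $2(m\cdot e)\,e$, hence contained in $\R^2 \setminus B_{2r_2 - r_1}(0) \subset \R^2 \setminus B_{r_2}(0)$. The pointwise initial ordering on $H$ then splits into two cases. If $|\xi| \le r_1$, then $R\xi \in \R^2 \setminus B_{r_2}(0)$ by the previous observation, so hypothesis (0.7) gives
$$u_0(\xi) \;\ge\; \mbox{ess}\inf_{\2{B}_{r_1}(0)} u_0 \;\ge\; \mbox{ess}\sup_{\R^2 \setminus B_{r_2}(0)} u_0 \;\ge\; u_0(R\xi).$$
If $|\xi| \ge r_1$, the identity $|R\xi|^2 - |\xi|^2 = -4\,((\xi - m)\cdot e)(m \cdot e)$ is non-negative on $H$ (the first factor is $\le 0$, the second $\ge r_2 > 0$), so $|R\xi| \ge |\xi| \ge r_1$, and radial monotonicity (0.4) yields $u_0(\xi) = u_0(|\xi|) \ge u_0(|R\xi|) = u_0(R\xi)$.

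The main obstacle is justifying the parabolic comparison step on the \emph{unbounded} half-plane $H$ for the singular equation (0.1). Both $u$ and $\widetilde u$ are bounded positive classical solutions, they coincide on $P \times (0, T)$, and $u(\cdot, t) - \widetilde u(\cdot, t) \in L^1(H)$, so I expect this to follow by the standard route of exhausting $H$ by bounded subdomains $H \cap B_N(0)$ and passing to the limit, using the $L^1(\R^2)$ bound on $u$ together with the comparison/maximum principle results already available for (0.1) in the literature (e.g.\ \cite{Hu1}, \cite{DP1}). Crucially, no Harnack-type estimate is required, keeping the argument elementary in the spirit of the paper.
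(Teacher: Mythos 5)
Your proof is correct and follows essentially the same route as the paper's: reflect across the perpendicular bisector $\Pi$ of $\overline{xy}$, use $\mathrm{dist}(\Pi,0)\ge r_2$ to order $u_0$ against its reflection on the half-plane via the two cases $|\xi|\le r_1$ (hypothesis (0.7)) and $|\xi|\ge r_1$ (radial monotonicity (0.4)), and then invoke a comparison principle on the half-plane. The one place you hedge --- justifying comparison on the unbounded half-plane --- is handled in the paper by citing Lemma 2.5 of \cite{ERV2} directly rather than by an exhaustion argument.
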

\begin{proof}
We will use a modification of the proof of Lemma 2.1 of \cite{CF}
to prove the lemma. 

For any $x^0,y^0\in\R^2$ such that $|y^0|\ge |x^0|+2r_2$ let $\Pi$ be the 
hyperplane of points in $\R^2$ which are equidistance from $x^0$ and $y^0$.
Then (cf. Lemma 2.1 of \cite{CF}),
$$
\Pi=\{x\in\R^2:x\cdot (x^0-y^0)=\mbox{{\small$\frac{1}{2}$}}
(x^0+y^0)\cdot (x^0-y^0)\}
$$
and
\begin{equation}
\mbox{dist}(\Pi,\{0\})=\frac{1}{2}\cdot\frac{|y^0|^2-|x^0|^2}{|x^0-y^0|}
\ge\frac{1}{2}\cdot\frac{|y^0|^2-|x^0|^2}{|x^0|+|y^0|}
\ge\frac{1}{2}(|y^0|-|x^0|)\ge r_2.
\end{equation}
We write $\R^2\setminus\Pi=\Pi_+\cup\Pi_-$ where $\Pi_+$ and $\Pi_-$
are the two half-spaces with respect to $\Pi$ with $0\in\Pi_-$.
By (1.1) $\{x^0\}\cup B_{r_2}\subset\Pi_-$ and $y^0\in\Pi_+$. By 
rotation we may assume without loss of generality that 
$$\left\{\begin{aligned}
&\Pi=\{(x_1,x_2)\in\R^2:x_1=a_0\}\\
&\Pi_-=\{(x_1,x_2)\in\R^2:x_1<a_0\}\\
&\Pi_+=\{(x_1,x_2)\in\R^2:x_1>a_0\}
\end{aligned}\right.
$$
where $a_0=\mbox{dist}(\Pi,\{0\})$. For any $x=(x_1,x_2)\in\Pi_-$ let
$\4{x}=(2a_0-x_1,x_2)$ be the reflection point of $x$ about $\Pi$. 
Then if $x\in\2{B}_{r_1}$, by (0.7) and (1.2)
\begin{equation}
u_0(x)\ge u_0(\4{x}).
\end{equation}  
If $x\in\Pi_-\setminus\2{B}_{r_1}$, then 
\begin{align*}
2a_0-x_1>|x_1|\quad\Rightarrow\quad&|\4{x}|\ge |x|>r_1\\
\Rightarrow\quad&u_0(x)\ge u_0(\4{x})\quad\mbox{(by (0.4))}.
\end{align*}
Hence (1.2) holds for any $x\in\Pi_-$. By the maximum principle for 
the equation
\begin{equation*}
u_t=\Delta\log u
\end{equation*}  
in the half-space $\Pi_-$ (cf. Lemma 2.5 of \cite{ERV2}),
\begin{equation*}
u(x,t)\ge u(\4{x},t)\quad\forall x\in\Pi_-, 0<t<T.
\end{equation*}  
Hence 
\begin{equation*}
u(x^0,t)\ge u(y^0,t)
\end{equation*}
and the lemma follows.
\end{proof}

By Lemma 1.1 for any $0<t<T$ there exists $x_t\in \2{B}_{2r_2}$ such that
\begin{equation}
u(x_t,t)=\max_{x\in\R^2}u(x,t).
\end{equation}
Similar to \cite{DP2} we let
\begin{equation}
\2{u}(x,\tau)=\tau^2u(x,t),\qquad\tau=\frac{1}{T-t},\tau>1/T.
\end{equation}
Then $\2{u}$ satisfies \cite{DP2},
\begin{equation}
\2{u}_{\tau}=\Delta\log\2{u}+\frac{2\2{u}}{\tau}\quad\mbox{ in }\R^2\times
(1/T,\infty).
\end{equation}
Let $R_{max}(t)=\max_{x\in\R^2}R(x,t)$ and let $W(t)$ be the width function
with respect to the metric $g_{ij}(t)$ as defined by P.~Daskalopoulos and 
R.~Hamilton in \cite{DH}. We recall a result of \cite{DH}.

\begin{thm} \cite{DH}
There exist positive constants $c>0$ and $C>0$ such that 
\begin{enumerate}
\item[(i)] $c(T-t)\le W(t)\le C(T-t)$
\item[(ii)] $\frac{c}{(T-t)^2}\le R_{max}(t)\le\frac{C}{(T-t)^2}$
\end{enumerate}
hold for any $0<t<T$.
\end{thm}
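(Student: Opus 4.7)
The plan is to combine the mass/area identity with the scalar curvature evolution on surfaces, and then sharpen the crude estimates by a rescaling argument.

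First, from (0.2) with $f\equiv 2$ and the identification $dA_{g(t)}=u\,dx$, one has the area identity $A(t)=4\pi(T-t)$. Since $u_t=\Delta\log u=-Ru$, this yields $dA/dt=-\int R\,dA_{g(t)}$, whence $\int R\,dA_{g(t)}=4\pi$ and the mean value of $R$ at time $t$ is $1/(T-t)$; in particular $R_{max}(t)\ge 1/(T-t)$, the crude starting lower bound for (ii).

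For the upper bound on $R_{max}$ I would invoke the standard surface Ricci flow identity $\partial_t R=\Delta_g R+R^2$ and the maximum principle. The ODE comparison $\dot y\le y^2$ for $y(t)=R_{max}(t)$ gives $y(t)\le 1/(T-t)$, since a larger value at some $t_0$ would force blowup strictly before $T$, contradicting the extinction time. This already implies $R_{max}(t)\le T/(T-t)^2$. The width upper bound $W(t)\le C(T-t)$ would follow by exhibiting a short separating loop: the linear area decay combined with the radial monotonicity supplied by Lemma~1.1 lets one pick a loop near the level set on which $u$ attains its maximum whose $g(t)$-length is comparable to $(T-t)$ times a bounded geometric factor.

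The main obstacle is the lower bound $R_{max}(t)\ge c/(T-t)^2$, which is strictly stronger than the Gauss--Bonnet bound and requires concentration of $R$ in a shrinking region. My approach is to rescale via (1.4)--(1.5) so that $\2{u}$ satisfies the almost autonomous equation (1.5) as $\tau\to\infty$, and then to show by monotonicity and barrier arguments that $\2{u}(\cdot,\tau)$ stays bounded away from zero near some point uniformly in $\tau$, with a concentrated profile comparable to a cigar of the form $8\lambda/(\lambda+|x|^2)^2$. Since the scalar curvature of the rescaled metric equals $(T-t)^2 R$, the positive constant scalar curvature of this limit profile transfers back to the desired quadratic lower bound on $R_{max}$. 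The matching width lower bound $W(t)\ge c(T-t)$ follows from the same rescaled picture, as any separating loop of the limiting cigar has a positive minimum length. The hard part will be establishing the non-collapse of $\2{u}(\cdot,\tau)$ by elementary means, using only (0.8) and the hypothesis (0.6) rather than the Hamilton--Yau Harnack inequality the paper sets out to avoid.
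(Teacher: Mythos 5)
Theorem 1.2 is not proved in this paper at all: the author explicitly writes ``We recall a result of [DH]'' and cites Daskalopoulos--Hamilton \cite{DH}. It is used as an ingredient, not derived. So the proper comparison here is between your sketch and the proof in \cite{DH}, which is geometric in nature (a chain of inequalities linking the width $W(t)$, the injectivity radius, and $R_{\max}$, together with a differential inequality for $W$), and does not resemble your outline.

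More importantly, your sketch has two genuine gaps. First, the ODE argument for the upper bound on $R_{\max}$ is logically broken. From $\partial_t R = \Delta_g R + R^2$ the maximum principle yields (modulo serious non-compactness issues on $\R^2$, which \cite{DH} handles with care) only the differential \emph{upper} bound $\dot y \le y^2$. That inequality does not force blow-up; it permits $y$ to stay bounded forever. Your step ``a larger value at some $t_0$ would force blowup strictly before $T$'' would require a matching \emph{lower} bound $\dot y \ge c\,y^2$, which the maximum principle does not give. Worse, the conclusion you draw, $R_{\max}(t)\le 1/(T-t)$, is incompatible with the statement you are trying to prove: part (ii) asserts $R_{\max}(t)\ge c/(T-t)^2$, which overwhelms $1/(T-t)$ as $t\to T$. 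An argument that, if valid, would disprove the theorem cannot be a proof of it.

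Second, the lower bound via ``rescale by (1.4)--(1.5) and extract a cigar limit'' is circular in the context of this paper: the compactness and non-collapse of the rescaled family $\2{u}_k$ in Theorem 1.3 is established precisely by \emph{using} Theorem 1.2 (see the passage from (1.6) to (1.8), which feeds Theorem 1.2(ii) into the Aronson--B\'enilan bound). Establishing the non-collapse independently, without either Theorem 1.2 or the Hamilton--Yau Harnack inequality, is exactly the hard analytic content of \cite{DH}, and it is not reproduced in a few lines. The parts of your sketch that are sound are the area identity $A(t)=4\pi(T-t)$, the Gauss--Bonnet-type consequence $\int R\,dA_{g(t)}=4\pi$, and the resulting crude bound $R_{\max}(t)\ge 1/(T-t)$; these give a starting point but fall one power of $(T-t)$ short of what is needed.
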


Let 
$$
\2{R}(x,\tau)=-\frac{\Delta\log\2{u}}{\2{u}}.
$$
Similar to the argument on P.862--863 of \cite{DP2} by Theorem 1.2,
(1.4) and the Aronson-B\'enlian inequality (0.8),
\begin{align}
&-\frac{1}{\tau^2t}\le\2{R}(x,\tau)\le C\quad\forall (x,\tau)\in
\R^2\times (1/T,\infty), t=T-(1/\tau)\nonumber\\
\Rightarrow\quad&-\frac{2}{\tau^2T}\le\2{R}(x,\tau)\le C\quad\forall 
(x,\tau)\in\R^2\times (2/T,\infty).
\end{align}
By (1.5) and (1.6),
\begin{align}
C\ge&\2{R}(x,\tau)=-\frac{\Delta\log\2{u}}{\2{u}}
=\frac{-\2{u}_{\tau}+\frac{2\2{u}}{\tau}}{\2{u}}
=-\frac{\2{u}_{\tau}}{\2{u}}+\frac{2}{\tau}\ge-\frac{2}{\tau^2T}
\quad\mbox{ in }\R^2\times (2/T,\infty)\\
\Rightarrow\qquad&\frac{2}{\tau}+\frac{2}{\tau^2T}
\ge\frac{\2{u}_{\tau}}{\2{u}}\ge-C+\frac{2}{\tau}
\ge-C\qquad\quad\mbox{ in }\R^2\times (2/T,\infty).
\end{align}

\begin{thm}
For any sequence $\{\tau_k\}_{k=1}^{\infty}$, $\tau_k\to\infty$ as 
$k\to\infty$, let $x_{t_k}\in \2{B}_{2r_2}$ be given by (1.3) with
$t=t_k$ and
\begin{equation}
\2{u}_k(y,\tau)=\alpha_k\2{u}(\alpha_k^{\frac{1}{2}}y+x_{t_k},\tau+\tau_k),
\quad y\in\R^2,\tau>-\tau_k+T^{-1}
\end{equation}
where 
\begin{equation}
t_k=T-\tau_k^{-1}\quad\forall k\in\Z^+
\end{equation}
and 
\begin{equation}
\alpha_k=1/\2{u}(x_{t_k},\tau_k).
\end{equation}
Then $\{\2{u}_k\}_{k=1}^{\infty}$ has a subsequence 
$\{\2{u}_{k_i}\}_{i=1}^{\infty}$ that converges uniformly on $C^{\infty}(K)$
for any compact set $K\subset\R^2\times (-\infty,\infty)$ as $i\to\infty$ to a 
positive solution $U$ of equation
\begin{equation}
U_{\tau}=\Delta\log U\quad\mbox{ in }\R^2\times (-\infty,\infty)
\end{equation}
with uniformly bounded non-negative scalar curvature and uniformly 
bounded width on $\R^2\times (-\infty,\infty)$ with respect to the 
metric $\4{g}_{ij}(t)=U(\cdot,t)\delta_{ij}$.
\end{thm}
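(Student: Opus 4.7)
The plan is to identify the equation satisfied by $\2{u}_k$, establish uniform bounds that permit an Arzel\`a--Ascoli argument, and pass to the limit. A direct computation starting from (1.5) and the change of variables $x=\alpha_k^{1/2}y+x_{t_k}$ shows that
\[
(\2{u}_k)_\tau \;=\; \Delta_y \log \2{u}_k + \frac{2\2{u}_k}{\tau + \tau_k},
\]
so since $\tau_k\to\infty$ the extra term vanishes on compact time intervals, and any $C^2_{loc}$ limit automatically satisfies (1.12). The scalar curvature transforms as $\2{R}_k(y,\tau) = \2{R}(\alpha_k^{1/2}y+x_{t_k},\tau+\tau_k)$, and by (1.6) lies in $[-2/((\tau+\tau_k)^2 T),\,C]$, yielding a non-negative, $C$-bounded scalar curvature in the limit. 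Moreover the map $x=\alpha_k^{1/2}y+x_{t_k}$ is an isometry between the metrics $\2{u}_k(\cdot,\tau)\delta_{ij}$ and $\2{u}(\cdot,\tau+\tau_k)\delta_{ij}$, so the width of $\4{g}_k$ at time $\tau$ equals $(\tau+\tau_k)\,W(T-(\tau+\tau_k)^{-1})$, and by Theorem 1.2(i) this is uniformly trapped in $[c,C]$.

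For the uniform upper bound on $\2{u}_k$, I would combine the fact that $x_{t_k}$ is the spatial maximum of $\2{u}(\cdot,\tau_k)$ with the differential inequality (1.8). Integrating $\2{u}_\tau/\2{u}\le 2/\tau+2/(\tau^2 T)$ forward from $\tau_k$ and $\2{u}_\tau/\2{u}\ge -C$ backward, then rescaling, yields $\2{u}_k(y,\sigma)\le (1+\sigma/\tau_k)^2 e^{2/(T\tau_k)}$ for $\sigma\ge 0$ and $\2{u}_k(y,\sigma)\le e^{C|\sigma|}$ for $\sigma<0$. Both are uniformly bounded on compact $\sigma$-intervals once $k$ is large enough that $\tau_k+\sigma>2/T$.

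The main obstacle, and the step requiring the most care, is the uniform positive lower bound on compact subsets of $\R^2\times(-\infty,\infty)$, because the paper deliberately avoids Hamilton--Yau. Since $(\log\2{u}_k)_\tau = -\2{R}_k + 2/(\tau+\tau_k)$ is uniformly bounded by the curvature estimate, it suffices to bound $\2{u}_k(\cdot,0)$ from below on compact $y$-sets. The function $w(y):=-\log\2{u}_k(y,0)$ is non-negative (because $y=0$ is the spatial maximum of $\2{u}_k(\cdot,0)$), vanishes at $y=0$, and satisfies $|\Delta w|=|\2{R}_k(\cdot,0)\,\2{u}_k(\cdot,0)|\le C$ pointwise via the curvature bound together with the upper bound on $\2{u}_k$. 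On any ball $B_R$ I would split $w=w_1+w_2$ with $\Delta w_1=\Delta w$ in $B_R$, $w_1=0$ on $\partial B_R$, so $|w_1|\le CR^2$; then $w_2$ is harmonic on $B_R$ and $w_2+CR^2\ge 0$, with $(w_2+CR^2)(0)\le 2CR^2$. The classical Harnack inequality for positive harmonic functions gives $w_2+CR^2\le C''R^2$ on $B_{R/2}$, whence $w\le C'''R^2$ and $\2{u}_k(y,0)\ge e^{-C'''R^2}$ on $B_{R/2}$. This elementary potential-theoretic argument serves as the substitute for the Hamilton--Yau Harnack inequality in this context.

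With $\2{u}_k$ trapped between positive constants on every compact subset of $\R^2\times(-\infty,\infty)$, the substitution $v_k=\log \2{u}_k$ converts (1.5) into the uniformly parabolic equation $(v_k)_\tau=e^{-v_k}\Delta v_k+2/(\tau+\tau_k)$, to which Krylov--Safonov and Schauder theory apply to give uniform $C^{\ell,\alpha}_{loc}$ bounds for every $\ell$. A diagonal subsequence therefore converges in $C^\infty_{loc}$ to a positive eternal solution $U$ of (1.12), and the curvature and width bounds pass directly to the limit, completing the argument.
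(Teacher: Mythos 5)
Your proposal is correct and follows the same overall scheme as the paper: the same rescaled equation (1.13), the same normalization $\2{u}_k(0,0)=1$, $\2{u}_k(\cdot,0)\le 1$, the same uniform upper bound from (1.8), the same curvature bounds derived from (1.6), uniform parabolicity combined with Schauder estimates and an Ascoli/diagonal argument for $C^\infty_{loc}$ compactness, and the same Daskalopoulos--Hamilton estimate (Theorem 1.2(i)) rescaled to control the width. The one place where you take a genuinely different route is the uniform local lower bound on $\2{u}_k$. The paper establishes it in two steps: a Green-function mean-value estimate for $\log(1/\2{u}_k)$ (Lemma 1.4), which is essentially a quantitative statement that the spatial mean of $\log(1/\2{u}_k)$ over a ball differs from the center value by $O(R^2)$ because $\Delta\log\2{u}_k$ is bounded, followed by a ball-inclusion/doubling argument (Lemma 1.5) that turns this into a two-sided Harnack-type inequality $\sup\2{u}_k^9\le C\inf\2{u}_k$ on parabolic cylinders. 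You instead write $w=-\log\2{u}_k(\cdot,0)\ge 0$ with $w(0)=0$ and $|\Delta w|=|\2{R}_k\2{u}_k|\le C$, split $w$ into a Newtonian potential of size $O(R^2)$ plus a harmonic function, and invoke the classical Harnack inequality for nonnegative harmonic functions to conclude $w\le CR^2$ on $B_{R/2}$; the time-derivative bound $|(\log\2{u}_k)_\tau|\le C$ then propagates this to $\tau\in[a,b]$. Both arguments hinge on the same underlying fact --- a uniform bound on $\Delta\log\2{u}_k$, gotten from the curvature estimate in your version and from the evolution equation plus (1.16)--(1.17) in the paper's --- and both are comparably elementary. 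The paper's Lemma 1.5 has the small advantage of producing a reusable, normalization-free two-sided Harnack on compacta, while your version makes the dependence on the normalization $\2{u}_k(0,0)=1$ and on the scalar curvature bound more transparent. Either route is a legitimate substitute for Hamilton--Yau here.
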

\begin{proof}
We first observe that by (1.5),
\begin{equation}
\2{u}_{k,\tau}=\Delta\log\2{u}_k+\frac{2\2{u}_k}{\tau+\tau_k}\quad
\mbox{ in }\R^2\times (-\tau_k+(1/T),\infty)
\end{equation}
with
\begin{equation}
\2{u}_k(0,0)=1\quad\mbox{ and }\quad\2{u}_k(y,0)\le 1\quad\forall 
y\in\R^2.
\end{equation}
Since $(\log\2{u}_k)_{\tau}=(\log\2{u})_{\tau}$, by (1.8),
\begin{equation}
-C\le\frac{\2{u}_{k,\tau}}{\2{u}_k}\le\frac{2}{\tau+\tau_k}
+\frac{2}{(\tau+\tau_k)^2T}\le\frac{3T}{2}\quad\mbox{ in }
\R^2\times (2/T-\tau_k,\infty).
\end{equation}
For any $-\infty<a<0<b<\infty$ we choose $k_0\in\Z^+$ such that
$-\tau_k+(2/T)<a$ for any $k\ge k_0$. Then by (1.14) and (1.15) 
there exists a constant $M_1>0$ such that
\begin{equation}
\2{u}_k(x,\tau)\le M_1\quad x\in\R^2,a\le\tau\le b,k\ge k_0.
\end{equation}
By (1.15) and (1.16) there exists a constant $C>0$ such 
that
\begin{equation}
|\2{u}_{k,\tau}(x,\tau)|\le CM_1\quad x\in\R^2,a\le\tau\le b,k\ge k_0.
\end{equation}
To complete the proof of the theorem we need the following two
technical lemmas.

\begin{lem}
There exists a constant $C_1>0$ such that
\begin{align}
-C_1R_1^2M_1+\log\biggl(\frac{1}{\2{u}_k(x_0,\tau)}\biggr )
\le&\frac{1}{|B_{R_1}|}\int_{B_{R_1}(x_0)}\log
\biggl(\frac{1}{\2{u}_k(x,\tau)}\biggr )\,dx\nonumber\\
\le&C_1R_1^2M_1+\log\biggl(\frac{1}{\2{u}_k(x_0,\tau)}\biggr )
\end{align}
holds for any $R_1>0$, $x_0\in\R^2$, $a\le\tau\le b$ and $k\ge k_0$.
\end{lem}
\begin{proof}
We will use a modification of the proof of Lemma 6 of \cite{V}
and Lemma 2.6 of \cite{Hu1} to proof the lemma. Let
$$
G_{R_1}(x)=\log (R_1/|x-x_0|)+\frac{1}{2}R_1^{-2}
(|x-x_0|^2-R_1^2)
$$
be the Green function for $B_{R_1}(x_0)$. Then $G_{R_1}\ge 0$ and 
$\Delta G_{R_1}=2R_1^{-2}-2\pi\delta_0$ where $\delta_0$ is the delta 
mass at the origin. By (1.13),
\begin{align}
&\int_{B_{R_1}(x_0)}G_{R_1}(x)\2{u}_{k,\tau}(x,\tau)\,dx\nonumber\\
=&\int_{B_{R_1}(x_0)}G_{R_1}(x)\left(\Delta\log\2{u}_k(x,\tau)
+\frac{2\2{u}_k(x,\tau)}{\tau+\tau_k}\right)\,dx\nonumber\\
=&2\pi\left(\log\left(\frac{1}{\2{u}_k(x_0,\tau)}\right)
-\frac{1}{|B_{R_1}|}\int_{B_{R_1}(x_0)}
\log\left(\frac{1}{\2{u}_k(x,\tau)}\right)\,dx\right)\nonumber\\
&\qquad +\frac{2}{\tau+\tau_k}\int_{B_{R_1}(x_0)}G_{R_1}(x)
\2{u}_k(x,\tau)\,dx\qquad\forall\tau\ge-\tau_k+\frac{2}{T},k\ge k_0.
\end{align}
Since 
$$
\int_{B_{R_1}(x_0)}G_{R_1}(x)\,dx\le CR_1^2,
$$
by (1.16), (1.17), and (1.19) we get (1.18) and the lemma follows.
\end{proof}

\begin{lem}
For any $R_1>0$ there exists a constant $C_2>0$ such that
\begin{equation*}
\sup_{\tiny\begin{array}{c}
|y|\le R_1\\
a\le\tau_1\le b\end{array}}\2{u}_k(y,\tau_1)^9
\le C_2\inf_{\tiny\begin{array}{c}
|x|\le R_1\\
a\le\tau_2\le b\end{array}}\2{u}_k(x,\tau_2)\quad\forall k\ge k_0.
\end{equation*}
\end{lem}
\begin{proof}
Let $|x_0|,|y_0|\le R_1$, $\tau_1,\tau_2\in [a,b]$ and $k\ge k_0$. 
Since $B_{R_1}(x_0)\subset B_{3R_1}(y_0)$, by Lemma 1.4,
\begin{align*}
\log\left(\frac{1}{\2{u}_k(x_0,\tau_1)}\right)
\le&\frac{1}{|B_{R_1}|}\int_{B_{R_1}(x_0)}
\log\left(\frac{1}{\2{u}_k(x,\tau_1)}\right)\,dx+C_1M_1R_1^2\\
\le&\frac{9}{|B_{3R_1}|}\int_{B_{3R_1}(y_0)}
\log\left(\frac{1}{\2{u}_k(x,\tau_1)}\right)\,dx+C_1M_1R_1^2\\
\le&9\log\left(\frac{1}{\2{u}_k(y_0,\tau_1)}\right)+C'M_1R_1^2
\end{align*}
for some constants $C_1>0$ and $C'=82C_1$. Hence
\begin{equation}
\frac{1}{\2{u}_k(x_0,\tau_1)}
\le\frac{e^{C'M_1R_1^2}}{\2{u}_k(y_0,\tau_1)^9}\quad
\Rightarrow\quad\2{u}_k(y_0,\tau_1)^9\le e^{C'M_1R_1^2}
\2{u}_k(x_0,\tau_1).
\end{equation}
Now by (1.15) there exists a constant $C>0$ such that
\begin{equation}
\2{u}_k(x,\tau_1')\le C\2{u}_k(x,\tau_2')\quad\forall x\in\R^2,
\tau_1',\tau_2'\in [a,b],k\ge k_0.
\end{equation}
By (1.20) and (1.21),
\begin{equation*}
\2{u}_k(y_0,\tau_1)^9\le C_2\2{u}_k(x_0,\tau_2)
\end{equation*}
holds for some constant $C_2>0$ and the lemma follows.
\end{proof}

We will now continue with the proof of Theorem 1.3. By (1.14) and 
Lemma 1.5 for any $R_1>0$ there exist constants $C_3>0$ and $C_4>0$ 
such that
\begin{equation*}
C_3\le\2{u}_k(x,\tau_1)\le C_4\quad\forall |x|\le R_0,a\le\tau\le b,
k\ge k_0.
\end{equation*}
Hence the equation (1.13) for $\2{u}_k$ is uniformly parabolic on 
$\2{B}_{R_1}\times [a,b]$ for all $k\ge k_0$. By the parabolic Schauder 
estimates \cite{LSU} $\2{u}_k$ are uniformly Holder continuous in 
$C^{2\gamma,1\gamma}(\2{B}_{R_1}\times [a,b])$ for any $\gamma\in\Z^+$. 
By the Ascoli theorem and a diagonalization argument the sequence 
$\{\2{u}_k\}_{k=1}^{\infty}$ has a subsequence
which we may assume without loss of generality to be the sequence itself
that converges uniformly in $C^{\infty}(K)$ as $k\to\infty$ for any compact
set $K\subset\R^2\times (-\infty,\infty)$ to some positive function 
$U$ that satisfies (1.12).

Let
\begin{equation}
\2{R}_k=-\frac{\Delta\log\2{u}_k}{\2{u}_k}.
\end{equation}
Then $\2{R}_k$ converges uniformly on every compact subset of 
$\R^2\times (-\infty,\infty)$ as $k\to\infty$ to the scalar curvature
$\4{R}=-(\Delta\log U)/U$ of the metric $\4{g}_{ij}(\tau)
=U(\cdot,\tau)\delta_{ij}$. Note that by (1.6),
\begin{align*}
&-\frac{2}{(\tau+\tau_k)^2T}\le\2{R}_k(y,\tau)\le C\quad\forall 
(y,\tau)\in\R^2\times (2/T-\tau_k,\infty)\\
\Rightarrow\quad&0\le\4{R}(y,\tau)\le C\quad\forall 
(y,\tau)\in\R^2\times (-\infty,\infty)\quad\mbox{ as }k\to\infty.
\end{align*}
Finally similar to the argument on P.10 of \cite{DS2} by Theorem 1.2
and an approximation argument the width function with respect to the
metric $\4{g}_{ij}(\tau)=U(\cdot,\tau)\delta_{ij}$ is uniformly bounded
on $\R^2\times (-\infty,\infty)$.  
\end{proof}

\begin{thm}
For any sequence $\{\tau_k\}_{k=1}^{\infty}$, $\tau_k\to\infty$ as 
$k\to\infty$, let $\2{u}_k$ be given by (1.9).
Then $\{\2{u}_k\}_{k=1}^{\infty}$ has a subsequence 
$\{\2{u}_{k_i}\}_{i=1}^{\infty}$ that converges uniformly 
in $C^{\infty}(K)$ for any compact set $K\subset \R^2\times
(-\infty,\infty)$ as $i\to\infty$ to 
\begin{equation}
U(y,\tau)=\frac{1}{\lambda|y|^2+e^{4\lambda\tau}}.
\end{equation}
for some constant $\lambda>0$.
\end{thm}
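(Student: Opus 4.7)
The plan is to combine Theorem 1.3, which gives subsequential $C^{\infty}_{\mathrm{loc}}$ convergence to some positive eternal solution of $U_\tau=\Delta\log U$, with the classification of such eternal solutions. Concretely, the classification due to P.~Daskalopoulos, R.~Hamilton and N.~Sesum (\cite{DH}, \cite{DS2}) asserts that any positive eternal solution of (1.12) on $\R^2\times\R$ whose scalar curvature $\4{R}=-\Delta\log U/U$ is uniformly bounded and nonnegative and whose width relative to $\4{g}_{ij}(\tau)=U(\cdot,\tau)\delta_{ij}$ is uniformly bounded in $\tau$ must be a soliton of the form
$$
U(y,\tau)=\frac{1}{\lambda|y-y_0|^2+\beta\,e^{4\lambda(\tau-\tau_0)}}
$$
for some constants $\lambda>0$, $\beta>0$, $y_0\in\R^2$ and $\tau_0\in\R$.

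First I would apply Theorem 1.3 to extract a subsequence $\{\2{u}_{k_i}\}$ converging in $C^\infty(K)$ for every compact $K\subset\R^2\times\R$ to a positive solution $U$ of (1.12) with $0\le\4{R}\le C$ and uniformly bounded width with respect to $\4{g}_{ij}$. Passing to the limit in (1.14) also yields
$$
U(0,0)=1\qquad\mbox{and}\qquad U(y,0)\le 1\quad\forall\,y\in\R^2,
$$
so $y=0$ is a maximum point of $U(\cdot,0)$.

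Next I would invoke the classification to conclude that $U$ has the soliton form above, and then pin down the parameters from the normalization. Since the spatial maximum of $U(\cdot,0)$ for such a soliton is attained at $y=y_0$, the inequality $U(y,0)\le U(0,0)=1$ forces $y_0=0$; the identity $U(0,0)=1$ then forces $\beta e^{-4\lambda\tau_0}=1$. Substituting back absorbs $\beta$ and $\tau_0$ into a single free parameter $\lambda>0$ and produces exactly (1.23). The main obstacle here is the classification step itself, which is substantial, but it is precisely the content established in \cite{DH} and \cite{DS2}; the key point is that Theorem 1.3 has already handed us all of its hypotheses (bounded nonnegative scalar curvature and bounded width of the limit metric) without any appeal to the Hamilton--Yau Harnack inequality, so the classification can be invoked directly.
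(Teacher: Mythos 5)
Your proof is essentially the same as the paper's: apply Theorem 1.3 to extract a subsequential $C^\infty_{\mathrm{loc}}$ limit $U$ solving (1.12) with bounded nonnegative curvature and bounded width, invoke the classification of such eternal solutions to get a soliton, then use the normalization $U(0,0)=1$, $U(\cdot,0)\le1$ from (1.14) to force $y_0=0$ and absorb the remaining parameters into a single $\lambda>0$. The one correction: the classification theorem you need is that of Daskalopoulos and Sesum, \emph{Eternal solutions to the Ricci flow on $\R^2$} (cited as \cite{DS1} in this paper), not \cite{DH} or \cite{DS2}.
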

\begin{proof}
By Theorem 1.3 $\{\2{u}_k\}_{k=1}^{\infty}$ 
has a subsequence $\{\2{u}_{k_i}\}_{i=1}^{\infty}$ that converges 
uniformly in $C^{\infty}(K)$ for any compact set $K\subset \R^2\times
(-\infty,\infty)$ as $i\to\infty$ to a solution $U(y,\tau)$ of (1.12). 
By Theorem 1.3 and the result of \cite{DS1}, 
\begin{equation}
U(y,\tau)=\frac{2}{\beta(|y-y_0|^2+\delta e^{2\beta\tau})}
\end{equation} 
for some $y_0\in\R^2$ and constants $\beta>0$, $\delta>0$. Since 
$\2{u}_{k_i}$ converges uniformly on every compact subset of 
$\R^2\times (-\infty,\infty)$ to $U(y,\tau)$ and $\2{u}_k(y,0)$ 
attains its maximum at $y=0$, $U(y,0)$ will attain its maximum at 
$y=0$. Hence $y_0=0$. By (1.14),
\begin{equation}
U(0,0)=1\quad\Rightarrow\quad 1=\frac{2}{\beta\delta}.
\end{equation} 
By (1.24) and (1.25) we get (1.23) with $\lambda=\beta/2>0$.
\end{proof}

We now let 
\begin{equation}
\alpha (\tau)=1/\2{u}(x_t,\tau)
\end{equation} 
where $\tau=1/(T-t)$, $\tau>1/T$, and $x_t\in\2{B}_{2r_2}$ satisfies (1.3).

\begin{lem}
There exist constants $\delta>0$ and $\tau_0>1/T$ such that 
\begin{equation}
\liminf_{\delta'\to 0^+}\biggl(
\frac{\log\alpha(\tau)-\log\alpha (\tau-\delta')}{\delta'}\biggr)>\delta
\quad\forall\tau\ge\tau_0.
\end{equation}
\end{lem}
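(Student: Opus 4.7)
The plan is to bound the required liminf from below by $\2{R}(x_t,\tau)-2/\tau$ via an envelope-type inequality at the maximizer, and then to show that this lower bound is uniformly positive for all large $\tau$ by a contradiction argument using Theorem 1.6.

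For the envelope step, observe that by (1.3) and the definition (1.26) of $\alpha$,
\begin{equation*}
\alpha(\tau-\delta')^{-1}=\max_{x\in\R^2}\2{u}(x,\tau-\delta')\ge\2{u}(x_t,\tau-\delta'),
\end{equation*}
where $x_t$ is the maximizer of $\2{u}(\cdot,\tau)$. Hence
\begin{equation*}
\log\alpha(\tau)-\log\alpha(\tau-\delta')\ge\log\2{u}(x_t,\tau-\delta')-\log\2{u}(x_t,\tau).
\end{equation*}
Taylor-expanding the smooth function $\log\2{u}(x_t,\cdot)$ at $\tau$ and using the identity $\2{u}_\tau/\2{u}=-\2{R}+2/\tau$ read off from (1.7), I obtain
\begin{equation*}
\frac{\log\alpha(\tau)-\log\alpha(\tau-\delta')}{\delta'}\ge\2{R}(x_t,\tau)-\frac{2}{\tau}+O(\delta'),
\end{equation*}
so letting $\delta'\to 0^+$ yields $\liminf\ge\2{R}(x_t,\tau)-2/\tau$.

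Next I would prove $\liminf_{\tau\to\infty}\2{R}(x_t,\tau)>0$ by contradiction. If this fails, there is a sequence $\tau_k\to\infty$ with $\2{R}(x_{t_k},\tau_k)\to 0$. Apply Theorem 1.6 to this sequence and pass to a subsubsequence so that the rescalings $\2{u}_k$ defined in (1.9) converge in $C^\infty(K)$ for every compact $K\subset\R^2\times(-\infty,\infty)$ to $U(y,\tau)=1/(\lambda|y|^2+e^{4\lambda\tau})$ with $\lambda>0$. A direct calculation from (1.9) shows that $\2{R}_k(y,\tau)=\2{R}(\alpha_k^{1/2}y+x_{t_k},\tau+\tau_k)$, hence $\2{R}_k(0,0)=\2{R}(x_{t_k},\tau_k)\to 0$. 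On the other hand, the $C^\infty$ convergence forces $\2{R}_k(0,0)\to\4{R}(0,0)$, where $\4{R}=-\Delta\log U/U$; equation (1.12) gives $\4{R}=-U_\tau/U$, and the explicit form of $U$ yields $\4{R}(0,0)=4\lambda>0$, a contradiction. Therefore there exist $\delta_0>0$ and $\tau_0>1/T$ so that $\2{R}(x_t,\tau)-2/\tau\ge 2\delta_0$ for all $\tau\ge\tau_0$, and taking $\delta=\delta_0$ completes the proof.

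The main obstacle is the second step: one must recognise that the positivity of $\4{R}(0,0)=4\lambda$ encodes the fact that the Barenblatt-type limit $U$ produced by Theorem 1.6 is genuinely concentrating ($\lambda>0$), which is ultimately what forces $\alpha$ to grow with a uniform positive exponential rate. The envelope estimate of the first step is routine but has to be formulated one-sidedly (left difference quotient against the fixed maximizer $x_t$ at time $\tau$), since $\alpha$ is in general only Lipschitz in $\tau$.
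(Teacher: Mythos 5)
Your argument is correct and follows essentially the same route as the paper: both rely on the one-sided comparison $\alpha(\tau-\delta')^{-1}\ge\2{u}(x_t,\tau-\delta')$ at the fixed maximizer, equation (1.5)/(1.7) to convert $-\2{u}_\tau/\2{u}$ into $\2{R}-2/\tau$, and the compactness from Theorems 1.3 and 1.6 giving $\2{R}_k(0,0)\to 4\lambda>0$. The only difference is cosmetic: you isolate the envelope inequality as a direct estimate and run the contradiction only on $\liminf_{\tau\to\infty}\2{R}(x_t,\tau)$, whereas the paper embeds the whole difference-quotient step inside a single contradiction argument.
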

\begin{proof}
Suppose (1.27) does not hold. Then there exist a sequence of positive numbers
$\{\delta_k\}_{k=1}^{\infty}$, $\delta_k\to 0$ as $k\to\infty$, and a 
sequence $\{\tau_k\}_{k=1}^{\infty}$, $\tau_k>1/T$ for all $k\in\Z^+$
and $\tau_k\to\infty$ as $k\to\infty$, such that 
\begin{equation*}
\liminf_{\delta'\to 0^+}\biggl(
\frac{\log\alpha(\tau_k)-\log\alpha (\tau_k-\delta')}{\delta'}\biggr)
\le\delta_k\quad\forall k\in\Z^+.
\end{equation*}
Hence for each $k\in\Z^+$
there exists a sequence of positive numbers $\{\delta_{k,j}\}_{j=1}^{\infty}$,
$\delta_{k,j}\to 0$ as $j\to\infty$, such that
\begin{equation}
\frac{\log\alpha(\tau_k)-\log\alpha (\tau_k-\delta_{k,j})}{\delta_{k,j}}
<2\delta_k\quad\forall k,j\in\Z^+.
\end{equation}
Let $t_k$ be given by (1.10) and let $t_{k,j}=T-(\tau_k-\delta_{k,j})^{-1}$.
Let $x_{t_k}$ and $x_{t_{k,j}}$ be given by (1.3) with $t=t_k, t_{k,j}$ 
respectively. Then by (1.26) and (1.28),
\begin{equation}
\frac{\log\2{u}(x_{t_{k,j}},\tau_k-\delta_{k,j})-\log\2{u}(x_{t_k},\tau_k)}{\delta_{k,j}}
<2\delta_k\quad\forall k,j\in\Z^+.
\end{equation}
Since
\begin{equation*}
\2{u}(x_{t_k},\tau_k-\delta_{k,j})\le\max_{z\in\R^2}\2{u}
(z,\tau_k-\delta_{k,j})
=\2{u}(x_{t_{k,j}},\tau_k-\delta_{k,j}), 
\end{equation*}
by (1.29),
\begin{align}
&\frac{\log\2{u}(x_{t_k},\tau_k-\delta_{k,j})-\log\2{u}(x_{t_k},\tau_k)}{\delta_{k,j}}
<2\delta_k\quad\forall k,j\in\Z^+\nonumber\\
\Rightarrow\quad&\frac{\2{u}_{\tau}}{\2{u}}(x_{t_k},\tau_k)
\ge -2\delta_k\quad\forall k\in\Z^+\quad\mbox{ as }j\to\infty
\nonumber\\
\Rightarrow\quad&\frac{\Delta\log\2{u}}{\2{u}}(x_{t_k},\tau_k)
+\frac{2}{\tau_k}\ge -2\delta_k\quad\forall k\in\Z^+\qquad 
(\mbox{by (1.5)})\nonumber\\
\Rightarrow\quad&-\2{R}(x_{t_k},\tau_k)
+\frac{2}{\tau_k}\ge -2\delta_k\quad\forall k\in\Z^+.
\end{align}
Let $\2{u}_k$ be given by (1.9) with $\alpha_k=\alpha (\tau_k)$ and
$\2{R}_k$ be given by (1.22). Since $\2{R}_k(0,0)=\2{R}(x_{t_k},\tau_k)$,
by (1.30),
\begin{equation}
\2{R}_k(0,0)\le 2\delta_k+\frac{2}{\tau_k}\quad\forall k\in\Z^+.
\end{equation}
By Theorem 1.3 and Theorem 1.6 $\2{u}_k$ has a subsequence which we may 
assume without loss of generality to be the sequence itself that converges
uniformly on $C^{\infty}(K)$ for any compact set $K\subset\R^2\times 
(-\infty,\infty)$ as $k\to\infty$ to some function $U(y,\tau)$ given by 
(1.23) for some $\lambda>0$. Then
\begin{equation}
\2{R}_k(0,0)\to -\frac{\Delta\log U}{U}(0,0)=4\lambda\quad\mbox{ as }
k\to\infty.
\end{equation}
Letting $k\to\infty$ in (1.31), by (1.32) we get
\begin{equation*}
4\lambda\le 0.
\end{equation*}
Since $\lambda>0$, contradiction arises. Hence there exist constants 
$\delta>0$ and $\tau_0>1/T$ such that (1.27) holds.
\end{proof}

\begin{cor}
Let $\delta>0$ and $\tau_0>1/T$ be as given by Lemma 1.7. Then
\begin{equation*}
\alpha(\tau)\ge\alpha (\tau_0)\,e^{\delta(\tau-\tau_0)}\quad\forall\tau
\ge\tau_0.
\end{equation*}
\end{cor}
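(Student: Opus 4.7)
The plan is to deduce the exponential lower bound directly from Lemma 1.7 by a standard Dini-derivative monotonicity argument. I would set
$$g(\tau) := \log\alpha(\tau) - \delta\tau \qquad\mbox{on }[\tau_0,\infty),$$
so that Lemma 1.7 becomes the statement that the lower left Dini derivative
$D^-g(\tau) := \liminf_{\delta'\to 0^+}(g(\tau)-g(\tau-\delta'))/\delta'$
is strictly positive for every $\tau\ge\tau_0$. Since the conclusion of the corollary is equivalent to $g(\tau)\ge g(\tau_0)$ for all $\tau\ge\tau_0$, it suffices to show that $g$ is non-decreasing on $[\tau_0,\infty)$; exponentiating then gives the stated estimate.

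Before the monotonicity argument, I would record that $\alpha$, and hence $g$, is continuous on $[\tau_0,\infty)$. This is immediate from (1.8): the two-sided bound on $\2{u}_\tau/\2{u}$ obtained there is uniform in $x$, so $\log\2{u}(x,\cdot)$ is locally Lipschitz in $\tau$ uniformly in $x$, and taking the supremum in $x$ preserves this Lipschitz property for $-\log\alpha(\tau) = \log\max_{x\in\R^2}\2{u}(x,\tau)$.

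The monotonicity I would then prove by contradiction. If $g$ fails to be non-decreasing, pick $\tau_0\le\tau_1<\tau_2$ with $g(\tau_1)>g(\tau_2)$ and let $y^*\in[\tau_1,\tau_2]$ realize $\min_{[\tau_1,\tau_2]} g$, which exists by continuity. Since $g(y^*)\le g(\tau_2)<g(\tau_1)$, we must have $y^*>\tau_1$, so for every sufficiently small $\delta'>0$ the point $y^*-\delta'$ lies in $[\tau_1,y^*]$ and $g(y^*-\delta')\ge g(y^*)$ by minimality. This yields $(g(y^*)-g(y^*-\delta'))/\delta'\le 0$ for all small $\delta'$, whence $D^-g(y^*)\le 0$, contradicting the strict positivity $D^-g(y^*)>0$ provided by Lemma 1.7.

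The only step with any genuine subtlety is the continuity of $\alpha$, and that is dispatched by (1.8); everything else is a routine Dini-derivative manipulation. The crucial feature of Lemma 1.7 that drives the argument is the \emph{strict} inequality (not merely $\ge\delta$), since it allows the contradiction $D^-g(y^*)\le 0$ versus $D^-g(y^*)>0$ to be drawn without any uniform lower bound strictly larger than $\delta$.
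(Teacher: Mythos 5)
Your proof is correct and rests on the same ingredients as the paper's: Lemma 1.7, continuity of $\alpha$, and a contradiction argument that promotes the local Dini-derivative estimate to a global inequality. The difference is purely in the packaging of that last step. The paper defines $\delta_0$ as the supremum of increments $\delta'$ for which the difference inequality $\log\alpha(\tau)-\log\alpha(\tau-\delta')\ge\delta\delta'$ holds, and then ``creeps'' the endpoint backward by contradiction to force $\delta_0\ge\tau-\tau_0$. You instead reformulate the conclusion as monotonicity of $g(\tau)=\log\alpha(\tau)-\delta\tau$, assume it fails on some $[\tau_1,\tau_2]$, and take an interior minimizer $y^*$ of $g$ there, where $D^-g(y^*)\le 0$ contradicts the strict positivity supplied by Lemma 1.7. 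Both are standard proofs of the Dini-derivative monotonicity lemma; yours is arguably cleaner, and you add value by actually justifying the continuity of $\alpha$ from the uniform two-sided bound (1.8) on $\2{u}_\tau/\2{u}$, a fact the paper's proof invokes (``by continuity'') without comment.
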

\begin{proof}
By Lemma 1.7 there exists constants $\delta>0$ and $\tau_0>1/T$ such that 
(1.27) holds. Let $\tau>\tau_0$. By (1.27) there exists a constant 
$\delta_0'>0$ such that
\begin{equation}
\log\alpha(\tau)-\log\alpha (\tau-\delta')>\delta\delta'
\quad\forall 0<\delta'\le\delta_0'.
\end{equation}
Let 
$$
\delta_0=\sup\{\delta_1>0:\log\alpha(\tau)-\log\alpha (\tau-\delta')
\ge\delta\delta'\quad\forall 0<\delta'\le\delta_1\}.
$$
Then by (1.33) $\delta_0\ge\delta_0'$. We claim that $\delta_0\ge\tau -\tau_0$.
Suppose not. Then $\delta_0<\tau -\tau_0$. By continuity,
\begin{equation}
\log\alpha(\tau)-\log\alpha (\tau-\delta')\ge\delta\delta'
\quad\forall 0<\delta'\le\delta_0.
\end{equation}
Since $\tau -\delta_0>\tau_0$, by (1.27) there exists a constant 
$\delta_1'>0$ such that
\begin{equation}
\log\alpha(\tau-\delta_0)-\log\alpha (\tau-\delta_0-\delta')
>\delta\delta'\quad\forall 0<\delta'\le\delta_1'.
\end{equation}
By (1.34) and (1.35),
\begin{align*}
&\log\alpha(\tau)-\log\alpha (\tau-(\delta_0+\delta'))
>\delta(\delta_0+\delta')\quad\forall 0<\delta'\le\delta_1'\\
\Rightarrow\quad&\log\alpha(\tau)-\log\alpha (\tau-\delta')
\ge\delta\delta'\qquad\qquad\qquad\,\,\,
\forall 0<\delta'\le\delta_0+\delta_1'.
\end{align*}
This contradicts the definition of $\delta_0$. Hence
$\delta_0\ge\tau -\tau_0$. Thus
\begin{align*}
&\log\alpha(\tau)\ge\log\alpha (\tau_0)+\delta(\tau-\tau_0)\quad\forall
\tau\ge\tau_0\\
\Rightarrow\quad&\alpha(\tau)\ge\alpha (\tau_0)e^{\delta(\tau-\tau_0)}
\quad\forall\tau\ge\tau_0
\end{align*}
and the corollary follows.
\end{proof}

\begin{cor}
Let $\alpha_k$ be as in Theorem 1.3. Then $\alpha_k\to\infty$ as $k\to\infty$.
\end{cor}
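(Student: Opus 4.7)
The plan is to simply combine the definitions with the exponential lower bound established in Corollary 1.8, so the argument will be a one-line observation rather than a new estimate.

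First I would note that by the definition (1.26) of $\alpha(\tau)$ together with the definitions (1.10) and (1.11) of $t_k$ and $\alpha_k$, we have the identification
\begin{equation*}
\alpha_k=\frac{1}{\2{u}(x_{t_k},\tau_k)}=\alpha(\tau_k),
\end{equation*}
since $\tau_k=1/(T-t_k)$ and $x_{t_k}$ is exactly the maximiser appearing in (1.3) at time $t_k$.

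Next, with $\delta>0$ and $\tau_0>1/T$ the constants provided by Lemma 1.7 and used in Corollary 1.8, I would choose $k_0\in\Z^+$ large enough that $\tau_k\ge\tau_0$ for all $k\ge k_0$; this is possible because $\tau_k\to\infty$. Applying Corollary 1.8 at $\tau=\tau_k$ then yields
\begin{equation*}
\alpha_k=\alpha(\tau_k)\ge\alpha(\tau_0)\,e^{\delta(\tau_k-\tau_0)}\qquad\forall k\ge k_0.
\end{equation*}
Since $\alpha(\tau_0)>0$ is a fixed positive constant and $\delta(\tau_k-\tau_0)\to\infty$ as $k\to\infty$, the right-hand side tends to $+\infty$, giving $\alpha_k\to\infty$.

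There is no real obstacle here; all the analytic work was already done in Lemma 1.7 (via the contradiction with the explicit scalar curvature $4\lambda>0$ of the soliton limit $U$) and in its integration to the exponential growth statement of Corollary 1.8. The role of this corollary is bookkeeping: it records the consequence that the normalising constants $\alpha_k$ used in the blow-up sequence (1.9) are indeed unbounded, which is what justifies the choice of rescaling and is needed in the subsequent sections.
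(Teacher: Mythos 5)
Your proposal is correct and is exactly the argument the paper leaves implicit: the paper states Corollary 1.9 without proof precisely because, once one identifies $\alpha_k=\alpha(\tau_k)$ via (1.10), (1.11), and (1.26), it follows immediately from the exponential lower bound of Corollary 1.8 together with $\tau_k\to\infty$.
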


\begin{lem}
Let $\tau_k$, $\tau_{k_i}$, $\alpha_k$, $U(y,\tau)$ and $\lambda>0$ be
as in Theorem 1.3 and Theorem 1.6. Let
\begin{equation}
q_k(y,\tau)=\alpha_k\2{u}(\alpha_k^{\frac{1}{2}}y,\tau+\tau_k)
\end{equation}
where $\2{u}$ is given by (1.4). Then $q_{k_i}(y,\tau)$ converges uniformly 
in $C^{\infty}(K)$ for every compact set $K\subset\R^2$ to the function
$U(y,\tau)$ as $\tau\to\infty$.
\end{lem}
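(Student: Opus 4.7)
The plan is to recognise that $q_k$ differs from the rescaled solutions $\2{u}_k$ of Theorem 1.3 only by a translation in the $y$-variable that tends to zero, and then transfer the $C^\infty$-convergence $\2{u}_{k_i}\to U$ from Theorem 1.6 to the sequence $q_{k_i}$.

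First, I would compare the two rescalings directly. From the definitions,
\begin{equation*}
q_k(y,\tau)=\alpha_k\,\2{u}(\alpha_k^{1/2}y,\tau+\tau_k)=\2{u}_k(y-\eta_k,\tau),\qquad\eta_k:=\alpha_k^{-1/2}x_{t_k}.
\end{equation*}
Since $x_{t_k}\in\2{B}_{2r_2}$ by (1.3), one has $|\eta_k|\le 2r_2\,\alpha_k^{-1/2}$, and Corollary 1.9 gives $\alpha_k\to\infty$. Hence $\eta_{k_i}\to 0$ as $i\to\infty$.

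Next, fix any compact set $K\subset\R^2\times(-\infty,\infty)$ and let $K'$ be the closed $1$-neighbourhood of $K$ in the $y$-variable, which is again compact. For $i$ large enough that $|\eta_{k_i}|\le 1$, $(y-\eta_{k_i},\tau)\in K'$ whenever $(y,\tau)\in K$. For any $y$-multi-index $\beta$ and integer $j\ge 0$, I would decompose
\begin{equation*}
\1_y^\beta\1_\tau^j q_{k_i}(y,\tau)-\1_y^\beta\1_\tau^j U(y,\tau)=A_i(y,\tau)+B_i(y,\tau),
\end{equation*}
where $A_i(y,\tau)=(\1_y^\beta\1_\tau^j\2{u}_{k_i}-\1_y^\beta\1_\tau^j U)(y-\eta_{k_i},\tau)$ and $B_i(y,\tau)=\1_y^\beta\1_\tau^j U(y-\eta_{k_i},\tau)-\1_y^\beta\1_\tau^j U(y,\tau)$. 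The term $A_i$ tends to zero uniformly on $K$ by the $C^\infty(K')$-convergence $\2{u}_{k_i}\to U$ of Theorem 1.6. The term $B_i$ tends to zero uniformly on $K$ by the smoothness of the explicit limit $U$ given by (1.23) together with $\eta_{k_i}\to 0$. This yields $q_{k_i}\to U$ in $C^\infty(K)$.

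There is no substantive obstacle here: the lemma is essentially a translation/continuity argument. The whole point is that the centring by $x_{t_k}$ built into Theorem 1.3 can be dropped in the limit, because the effective shift $\eta_k=\alpha_k^{-1/2}x_{t_k}$ becomes negligible after rescaling; this is immediate from the uniform bound $|x_{t_k}|\le 2r_2$ of Lemma 1.1 combined with the blow-up $\alpha_k\to\infty$ of Corollary 1.9.
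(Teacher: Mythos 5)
Your argument is correct and is essentially the same as the paper's proof: both rely on the identity $q_k(y,\tau)=\2{u}_k(y-\alpha_k^{-1/2}x_{t_k},\tau)$, the uniform bound $|x_{t_k}|\le 2r_2$ from Lemma 1.1, the blow-up $\alpha_k\to\infty$ from Corollary 1.9, and then split the error into a Theorem 1.6 convergence term plus a translation term in the smooth explicit limit $U$. Your $A_i$ and $B_i$ are precisely the paper's (1.37) and (1.39).
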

\begin{proof}
Let $\2{u}_k(y,\tau)$ be given by (1.9) with $x_{t_k}$, $t_k$, given by 
(1.3) and (1.10). Then by Theorem 1.3 and Theorem 1.6 
$\2{u}_k(y,\tau)$ has a subsequence which we may assume without loss
of generality to be the sequence itself that converges uniformly on
$C^{\infty}(K)$ for every compact set $K\subset\R^2$ to the function
$U(y,\tau)$ given by (1.23) for some $\lambda>0$ as $k\to\infty$.

By Corollary 1.9 there exists $k_0\in\Z^+$ such that $\alpha_k\ge 1$ 
for all $k\ge k_0$. Let $K\subset\R^2\times (-\infty,\infty)$ be a 
compact set. Without loss
of generality we may assume that $K=\2{B}_{r_0}\times [\tau_0,\tau_0']$
for some $r_0>0$ and $\tau_0<\tau_0'$. Then for any $|y|\le r_0$ and 
$k\ge k_0$, we have 
$$
|y-\alpha_k^{-\frac{1}{2}}x_{t_k}|\le r_0+2r_2.
$$
Since
\begin{equation*}
q_k(y,\tau)=\2{u}_k(y-\alpha_k^{-\frac{1}{2}}x_{t_k},\tau),
\end{equation*}
by Theorem 1.6,
\begin{equation}
q_k(y,\tau)-U(y-\alpha_k^{-\frac{1}{2}}x_{t_k},\tau)\to 0\quad
\mbox{ uniformly on }C^{\infty}(K)\quad\mbox{ as }k\to\infty. 
\end{equation}
Now 
\begin{align*}
|U(y,\tau)-U(y-\alpha_k^{-\frac{1}{2}}x_{t_k},\tau)|
\le&\frac{\lambda}{(\lambda\rho_0+e^{4\lambda\tau})^2}
||y-\alpha_k^{-\frac{1}{2}}x_{t_k}|^2-|y|^2|\nonumber\\
\le&\lambda e^{-8\lambda\tau_0}\alpha_k^{-\frac{1}{2}}|x_{t_k}|
(2|y|+\alpha_k^{-\frac{1}{2}}|x_{t_k}|)\nonumber\\
\le&4\lambda e^{-8\lambda\tau_0}r_2(r_0+r_2)\alpha_k^{-\frac{1}{2}}
\end{align*}
holds for any $|y|\le r_0$ and $\tau_0\le\tau\le\tau_0'$ where $\rho_0$ is 
some constant between $|y|^2$ and $|y-\alpha_k^{-\frac{1}{2}}x_{t_k}|^2$.
Similarly for any $\gamma_0$, $\gamma_1$, 
$\gamma_2\in\Z^+\cup\{0\}$,
\begin{equation}
|\1_{\tau}^{\gamma_0}\1_{y_1}^{\gamma_1}\1_{y_2}^{\gamma_2}U(y,\tau)
-\1_{\tau}^{\gamma_0}\1_{y_1}^{\gamma_1}\1_{y_2}^{\gamma_2}
U(y-\alpha_k^{-\frac{1}{2}}x_{t_k},\tau)|
\le C\alpha_k^{-\frac{1}{2}}
\end{equation}
holds for some constant $C>0$ and any $|y|\le r_0$, $\tau_0\le\tau
\le\tau_0'$. Since $\alpha_k\to\infty$ as $k\to\infty$ by Corollary 1.9, 
by (1.38)
\begin{equation}
U(y,\tau)-U(y-\alpha_k^{-\frac{1}{2}}x_{t_k},\tau)\to 0\quad
\mbox{ uniformly on }C^{\infty}(K)\quad\mbox{ as }k\to\infty. 
\end{equation}
By (1.37) and (1.39) the theorem follows.
\end{proof}

\begin{thm}
Let $\{\tau_k\}_{k=1}^{\infty}$ be a sequence such that 
$\tau_k>1/T$ for all $k\in\Z^+$ and $\tau_k\to\infty$ as $k\to\infty$.
Let 
\begin{equation}
\beta (\tau)=1/\2{u}(0,\tau),
\end{equation} 
$\beta_k=\beta (\tau_k)$ and
\begin{equation}
\2{q}_k(y,\tau)=\beta_k\2{u}(\beta_k^{\frac{1}{2}}y,\tau+\tau_k)
\end{equation} 
where $\2{u}$ is given by (1.4). Then $\2{q}_k$ has a subsequence 
$\2{q}_{k_i}$ 
that converges uniformly on $C^{\infty}(K)$ for any compact set 
$K\subset\R^2\times (-\infty,\infty)$ to some function $U(y,\tau)$ 
given by (1.23) for some $\lambda>0$ as $\tau\to\infty$.
Moreover $\beta_k\to\infty$ as $k\to\infty$.
\end{thm}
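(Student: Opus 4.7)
The plan is to reduce Theorem 1.11 to Lemma 1.10 by observing that the two normalizations $\alpha_k=1/\2{u}(x_{t_k},\tau_k)$ and $\beta_k=1/\2{u}(0,\tau_k)$ differ by a factor that tends to $1$. First, since $x_{t_k}$ is a point of maximum of $\2{u}(\cdot,\tau_k)$ on $\R^2$, one has $\2{u}(0,\tau_k)\le\2{u}(x_{t_k},\tau_k)$, and hence $c_k:=\beta_k/\alpha_k\ge 1$. In particular $\beta_k\ge\alpha_k$, so Corollary 1.9 immediately yields $\beta_k\to\infty$ as $k\to\infty$, which establishes the last assertion of the theorem.

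The key technical point is the elementary scaling identity
$$
\2{q}_k(y,\tau)=c_k\,q_k(c_k^{1/2}y,\tau),
$$
valid for all $(y,\tau)$ in the common domain, where $q_k$ is defined by (1.36). This follows from (1.36), (1.41), and the identity $\alpha_k^{1/2}c_k^{1/2}=\beta_k^{1/2}$. Applying Lemma 1.10 to the sequence $\{\tau_k\}_{k=1}^{\infty}$, I would extract a subsequence $\{\tau_{k_i}\}_{i=1}^{\infty}$ along which
$$
q_{k_i}(y,\tau)\longrightarrow U(y,\tau)=\frac{1}{\lambda|y|^2+e^{4\lambda\tau}}
$$
in $C^{\infty}(K)$ for every compact $K\subset\R^2\times(-\infty,\infty)$ and some $\lambda>0$.

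The crucial step is then to identify $\lim_{i\to\infty}c_{k_i}$. Evaluating at $(y,\tau)=(0,0)$ gives
$$
q_{k_i}(0,0)=\alpha_{k_i}\,\2{u}(0,\tau_{k_i})=\alpha_{k_i}/\beta_{k_i}=c_{k_i}^{-1},
$$
and since $q_{k_i}(0,0)\to U(0,0)=1$, this forces $c_{k_i}\to 1$. Substituting back, the scaling identity together with the $C^{\infty}_{\mathrm{loc}}$ convergence $q_{k_i}\to U$ and the smoothness of $U$ yield
$$
\2{q}_{k_i}(y,\tau)=c_{k_i}\,q_{k_i}(c_{k_i}^{1/2}y,\tau)\longrightarrow U(y,\tau)
$$
in $C^{\infty}(K)$ for every compact $K\subset\R^2\times(-\infty,\infty)$, which is the desired conclusion.

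I do not foresee any serious obstacle. The only mildly delicate point is verifying that post-composition with $y\mapsto c_{k_i}^{1/2}y$ and multiplication by $c_{k_i}$ preserve $C^{\infty}_{\mathrm{loc}}$ convergence; this is a routine chain-rule argument, using that $c_{k_i}\to 1$ guarantees $c_{k_i}^{1/2}K$ is eventually contained in any fixed compact enlargement of $K$, together with the uniform $C^{\infty}$ bounds on $q_{k_i}$ and the smoothness of $U$ on such enlargements.
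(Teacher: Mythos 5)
Your proof is correct and follows essentially the same route as the paper: both rewrite $\2{q}_k$ as a rescaling of $q_k$ via $\2{q}_k(y,\tau)=(\beta_k/\alpha_k)\,q_k((\beta_k/\alpha_k)^{1/2}y,\tau)$, invoke Lemma 1.10 for $C^\infty_{\mathrm{loc}}$ convergence of $q_{k_i}$ to $U$, and deduce $\beta_{k_i}/\alpha_{k_i}\to 1$ from $q_{k_i}(0,0)\to U(0,0)=1$. Your observation $\beta_k\ge\alpha_k$ to obtain $\beta_k\to\infty$ is a minor shortcut equivalent to the paper's combination of (1.43) with Corollary 1.9.
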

\begin{proof}
Let $q_k(y,\tau)$ be given by (1.36) with $\alpha_k$ given by (1.11).
Let $K\subset\R^2\times (-\infty,\infty)$ be a compact set. As before 
we may assume without loss 
of generality that $K=\2{B}_{r_0}\times [\tau_0,\tau_0']$ for some $r_0>0$ 
and $\tau_0<\tau_0'$. By Lemma 1.10 $q_k(y,\tau)$ has a 
subsequence which we may assume without loss of generality to be the 
sequence itself that converges uniformly on $C^{\infty}(K)$ as 
$k\to\infty$. Then 
\begin{equation}
\frac{\alpha_k}{\beta_k}=q_k(0,0)\to U(0,0)=1\quad\mbox{ as }
k\to\infty.
\end{equation}
Hence there exists $k_0\in\Z^+$ and constants $c_2>c_1>0$ such that
\begin{equation}
c_1\le\frac{\beta_k}{\alpha_k}\le c_2\quad\forall k\ge k_0.
\end{equation}
Now
\begin{equation}
\2{q}_k(y,\tau)=\frac{\beta_k}{\alpha_k}
q_k((\beta_k/\alpha_k)^{\frac{1}{2}}y,\tau).
\end{equation}
Then for any $|y|\le r_0$, by (1.43),
$$
(\beta_k/\alpha_k)^{\frac{1}{2}}|y|\le c_2^{\frac{1}{2}}r_0
\quad\forall k\ge k_0.
$$ 
Hence by Lemma 1.10
\begin{equation}
q_k((\beta_k/\alpha_k)^{\frac{1}{2}}y,\tau)\to 
U((\beta_k/\alpha_k)^{\frac{1}{2}}y,\tau)
\end{equation}
uniformly on $C^{\infty}(K)$ as $k\to\infty$. By (1.23), (1.42), (1.44), 
(1.45) and Corollary 1.9 the theorem follows.
\end{proof}

\begin{lem}
Let $\tau_k$, $\tau_{k_i}$, $\beta_k$, $U(y,\tau)$ and $\lambda>0$ 
be as in Theorem 1.11. Then for any $\3>0$ and $M>0$ 
there exist $n_1\in\Z^+$ and $C>0$ such that
\begin{equation}\left\{\begin{aligned}
&\left|u(x,t_{k_i})-\frac{(T-t_{k_i})^2}{\lambda |x|^2+\beta_{k_i}}\right|
<u(0,t_{k_i})\3\quad\forall |x|\le\beta_{k_i}^{\frac{1}{2}}M,
i\ge n_1\\
&u(0,t_{k_i})\le C(T-t_{k_i})^2\qquad\qquad\qquad\,\,\,\,\forall i\ge n_1.
\end{aligned}\right.
\end{equation}
where $t_k$ is given by (1.10).
\end{lem}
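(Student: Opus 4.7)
The plan is to unwind the rescaling that relates $u$ to $\2{q}_k$, apply the uniform convergence provided by Theorem~1.11 on a single compact time slice, and then transfer the bound back to the original variables. I would begin by recording three elementary identities. First, from (1.4) and (1.10),
\begin{equation*}
\2{u}(x,\tau_k)=\tau_k^2u(x,t_k)=\frac{u(x,t_k)}{(T-t_k)^2}.
\end{equation*}
Second, substituting $y=x/\beta_k^{1/2}$ and $\tau=0$ into the definition (1.41) of $\2{q}_k$ gives
\begin{equation*}
u(x,t_k)=\frac{(T-t_k)^2}{\beta_k}\,\2{q}_k\bigl(x/\beta_k^{1/2},0\bigr),
\end{equation*}
and evaluating at $x=0$, using $\2{q}_k(0,0)=\beta_k\2{u}(0,\tau_k)=1$ from (1.40), yields the normalization $u(0,t_k)=(T-t_k)^2/\beta_k$. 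Third, the limit profile (1.23) at $\tau=0$ equals $U(y,0)=1/(\lambda|y|^2+1)$, so after the same substitution,
\begin{equation*}
\frac{(T-t_k)^2}{\beta_k}\,U\bigl(x/\beta_k^{1/2},0\bigr)=\frac{(T-t_k)^2}{\lambda|x|^2+\beta_k}.
\end{equation*}

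Given $\3,M>0$, I would then apply Theorem~1.11 to the compact set $K=\2{B}_M\times\{0\}$: along the subsequence $\{k_i\}$, $\2{q}_{k_i}(\cdot,0)\to U(\cdot,0)$ uniformly on $\2{B}_M$. I would choose $n_1\in\Z^+$ so that $|\2{q}_{k_i}(y,0)-U(y,0)|<\3$ for every $|y|\le M$ and $i\ge n_1$. For any $|x|\le\beta_{k_i}^{1/2}M$, the rescaled point $y=x/\beta_{k_i}^{1/2}$ lies in $\2{B}_M$, so subtracting the two displays above and multiplying by the factor $(T-t_{k_i})^2/\beta_{k_i}=u(0,t_{k_i})$ produces precisely the first inequality of (1.46).

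The second inequality requires no new work: Theorem~1.11 asserts that $\beta_k\to\infty$, hence $u(0,t_{k_i})/(T-t_{k_i})^2=1/\beta_{k_i}\to 0$, and after enlarging $n_1$ if necessary any fixed constant $C>0$ (for instance $C=1$) will do. The only point requiring genuine care is dimensional bookkeeping: one must verify that undoing the rescaling produces exactly the factor $u(0,t_{k_i})$ rather than some other power of $\beta_{k_i}$ or $(T-t_{k_i})$, and this is precisely the content of the identity $u(0,t_k)=(T-t_k)^2/\beta_k$. There is no real obstacle here; the lemma is essentially Theorem~1.11 at the single time $\tau=0$, rewritten in the original unscaled coordinates.
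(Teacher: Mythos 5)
Your argument is correct and follows the same route as the paper: apply the uniform convergence of $\2{q}_{k_i}(\cdot,0)$ to $U(\cdot,0)$ on $\2{B}_M$, unwind the scaling identities $u(0,t_k)=(T-t_k)^2/\beta_k$ and $u(x,t_k)=u(0,t_k)\,\2{q}_k(x/\beta_k^{1/2},0)$, and deduce the second bound from $\beta_{k_i}$ being bounded below. The paper obtains the lower bound on $\beta_{k_i}$ by citing Corollary~1.8 together with (1.43), which is exactly the content of your appeal to $\beta_k\to\infty$ from Theorem~1.11, so the two proofs are essentially identical.
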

\begin{proof}
Let $\2{q}_k$ be given by (1.41) and $k_0$ be as in the proof of Theorem 1.11. By Theorem 1.11 $\2{q}_k$ has a 
subsequence $\2{q}_{k_i}$ that converges uniformly on $C^{\infty}(K)$ for any
compact $K\subset\R^2\times (-\infty,\infty)$ to some function $U(y,\tau)$ 
given by (1.23) as $k\to\infty$. Then there exists $n_1\in\Z^+$ 
such that $k_i\ge k_0$ for all $i\ge n_1$ and 
\begin{align*}
&\left|\2{q}_{k_i}(y,0)-\frac{1}{\lambda |y|^2+1}\right|<\3
\quad\forall |y|\le M,i\ge n_1\\
\Rightarrow\quad&\left|\beta_{k_i}\tau_{k_i}^2u(x,t_{k_i})
-\frac{1}{\lambda\beta_{k_i}^{-1}|x|^2+1}\right|<\3
\quad\forall |x|\le\beta_{k_i}^{\frac{1}{2}}M,i\ge n_1\\
\Rightarrow\quad&\left|u(x,t_{k_i})-\frac{(T-t_{k_i})^2}{\lambda |x|^2+\beta_{k_i}}\right|<u(0,t_{k_i})\3\quad\forall |x|
\le\beta_{k_i}^{\frac{1}{2}}M,i\ge n_1.
\end{align*}
By Corollary 1.8 and (1.43) there exists a constant $C>0$ such that
$$
\beta_{k_i}>C\quad\forall i\ge n_1
$$
and the lemma follows.
\end{proof}

Since the scalar curvature $R(x,t)$ satisfies
$$
R_t=\Delta_{g(t)}R+R^2\quad\mbox{ in }\R^2\times (0,T)
$$
where $\Delta_{g(t)}=\frac{1}{u}\Delta$ is the Laplace-Beltrami 
operator with respect to the metric $g_{ij}(t)$, by (0.6) and
the maximum principle,
\begin{equation}
R(x,t)\ge-\frac{1}{t+\mu}\quad\mbox{ in }\R^2\times (0,T).
\end{equation} 
Then by an argument similar to the proof of Lemma 3.5 of \cite{DS2}
but with Lemma 2.4, Lemma 3.4, and $k(t)$ in the proof there being replaced by Lemma 1.1, Lemma 1.12, and $1/[2(t+\mu)]$ we have the following lemma.

\begin{lem}
The constant $\lambda$ in Theorem 1.11 satisfies
$$
\lambda\ge\frac{T+\mu}{2}.
$$
\end{lem}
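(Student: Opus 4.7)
The plan is to adapt the argument of Lemma 3.5 of \cite{DS2} by making the three substitutions signalled in the statement: Lemma 2.4 of \cite{DS2} is replaced by our Lemma 1.1 (which provides a global ordering of $u(\cdot, t)$ across a fixed translation of size $2r_2$, in place of exact radial symmetry/compact support); Lemma 3.4 of \cite{DS2} is replaced by our Lemma 1.12 (which supplies the precise inner-region profile together with the bound $u(0, t_{k_i}) \le C(T-t_{k_i})^2$); and the function $k(t)$ of \cite{DS2} is replaced by $1/[2(t+\mu)]$. The last substitution is natural because in two dimensions $\mathrm{Ric}_{g(t)} = (R/2)\, g(t)$, so the scalar curvature lower bound (1.47) reads
\[
\mathrm{Ric}_{g(t)} \ge -\frac{1}{2(t+\mu)}\, g(t) \qquad\mbox{in } \R^2 \times (0, T),
\]
which is the form in which a lower Ricci bound enters a Hamilton-type comparison of the kind used in \cite{DS2}.

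With those substitutions made, the execution proceeds in three steps. First, use Lemma 1.12 to compute the inner-region mass: for any $M > 0$ and any $\3 > 0$, for all $i$ sufficiently large, the approximation
\[
\int_{|x| \le \beta_{k_i}^{1/2} M} u(x, t_{k_i})\,dx = \frac{\pi (T-t_{k_i})^2}{\lambda} \log(1 + \lambda M^2) + O(\3\,(T-t_{k_i})^2 \log M)
\]
follows by an elementary change of variable. Second, combine this with the mass identity (0.2), which in the present setting reads $\int_{\R^2} u(x, t_{k_i})\,dx = 4\pi(T-t_{k_i})$, and apply Lemma 1.1 to control the distribution of the remaining mass in the outer region $|x| > \beta_{k_i}^{1/2} M$ in terms of its behaviour on the axis. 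Third, insert the Ricci lower bound with the substituted $k(t) = 1/[2(t+\mu)]$ into the volume/Harnack-type comparison of \cite{DS2}; this is what produces the sharp constant and, after sending $k_i \to \infty$ and then $M \to \infty$, yields the desired inequality $\lambda \ge (T+\mu)/2$. The role of $\mu$ is confined entirely to the constant in $k(t)$: setting $\mu = 0$ recovers the bound $\lambda \ge T/2$ familiar from the compactly supported case of \cite{DS2}.

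The main obstacle is checking that the geometric comparison step goes through under the weaker hypothesis supplied by Lemma 1.1 rather than the exact radial monotonicity/compact support exploited in \cite{DS2}. The key point that makes this possible is that the comparison is eventually applied after rescaling by $\beta_{k_i}^{1/2}$, which tends to $\infty$ by Corollary 1.9, so any ``defect from radiality'' measured by the fixed length scale $r_2$ in Lemma 1.1 contributes only at order $\beta_{k_i}^{-1/2}\to 0$ and drops out in the limit. Once this uniformity is secured and the factor $(t_{k_i}+\mu)^{-1}$ from $k(t)$ is tracked through the rescaling, the substitutions propagate mechanically through the argument of \cite{DS2} and deliver the claimed lower bound on $\lambda$.
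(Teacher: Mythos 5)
Your proposal tracks the paper's own proof quite closely in spirit: the paper's entire argument for Lemma~1.13 is the one-line instruction preceding it, namely to run the proof of Lemma~3.5 of \cite{DS2} with Lemma~2.4, Lemma~3.4, and $k(t)$ there replaced by Lemma~1.1, Lemma~1.12, and $1/[2(t+\mu)]$. You identify all three substitutions correctly, and the derivation of the pointwise Ricci lower bound $\mathrm{Ric}_{g(t)} \ge -\frac{1}{2(t+\mu)}g(t)$ from $R \ge -1/(t+\mu)$ (equation~(1.47)) is exactly the right reading of the $k(t)$ substitution, as is your remark that the $\mu=0$ case recovers the compactly supported bound $\lambda \ge T/2$ of \cite{DS2}. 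The observation that the $2r_2$ ``defect from radiality'' in Lemma~1.1 is killed by the $\beta_{k_i}^{1/2}\to\infty$ rescaling (Corollary~1.9) is the genuine content needed to justify that Lemma~1.1 can stand in for exact radial monotonicity, and you state it correctly.

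That said, the three-step outline you interpolate into \cite{DS2}'s proof is your own reconstruction, and the crucial third step is left as a black box. You write that one ``insert[s] the Ricci lower bound \dots into the volume/Harnack-type comparison of \cite{DS2}; this is what produces the sharp constant,'' but you neither state what that comparison is nor show how the inner-mass computation of your Step~1, the mass identity of your Step~2, and the Ricci bound of Step~3 actually combine to force $\lambda \ge (T+\mu)/2$. As a self-contained argument this is a gap: the inner-region mass you compute is of order $(T-t)^2\log M$, which is negligible relative to the total mass $4\pi(T-t)$, so the mass bookkeeping of Steps~1--2 does not by itself constrain $\lambda$ from below --- the constant can only arise from the (unstated) geometric comparison. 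Be careful also with the label ``Harnack-type'': the paper is explicit that it avoids the Hamilton--Yau Harnack inequality (that enters only in \cite{DS2}'s treatment of the outer region), so whatever comparison Lemma~3.5 of \cite{DS2} uses in the inner region, calling it a Harnack comparison invites confusion. In short, your proposal correctly mirrors the paper's pointer-with-substitutions proof and fills in the right peripheral details, but it does not supply the core estimate of \cite{DS2}, Lemma~3.5, and so would need that argument spelled out before it could be checked.
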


By Lemma 1.13 and an argument similar to the proof of Lemma 3.5 of \cite{DP2}
we have the following lemma.

\begin{lem}
Let $\beta(\tau)$ be given by (1.40). Then
$$
\liminf_{\tau\to\infty}\frac{\beta'(\tau)}{\beta(\tau)}\ge 2(T+\mu).
$$
\end{lem}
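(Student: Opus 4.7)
The plan is to express $\beta'(\tau)/\beta(\tau)$ directly in terms of the scalar curvature $\bar R(0,\tau)$ and then apply the blow-up classification of Theorem 1.11 together with the lower bound on $\lambda$ supplied by Lemma 1.13.

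First, I would differentiate $\beta(\tau)=1/\bar u(0,\tau)$ (which is smooth in $\tau$ since $\bar u(0,\tau)>0$) and substitute the evolution equation (1.5) at $x=0$. Writing $\bar R=-\Delta\log\bar u/\bar u$, this yields the identity
\begin{equation*}
\frac{\beta'(\tau)}{\beta(\tau)}
=-\frac{\bar u_\tau(0,\tau)}{\bar u(0,\tau)}
=-\frac{\Delta\log\bar u(0,\tau)}{\bar u(0,\tau)}-\frac{2}{\tau}
=\bar R(0,\tau)-\frac{2}{\tau}.
\end{equation*}
Since $2/\tau\to 0$ as $\tau\to\infty$, the claim is equivalent to showing
$\liminf_{\tau\to\infty}\bar R(0,\tau)\ge 2(T+\mu)$.

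Next I would argue by contradiction. Suppose there exist $\varepsilon>0$ and a sequence $\tau_k\to\infty$ with $\bar R(0,\tau_k)<2(T+\mu)-\varepsilon$. Applying Theorem 1.11 to $\{\tau_k\}$, I extract a subsequence (still indexed by $k$) along which the rescaled solutions $\tilde q_k(y,\tau)=\beta_k\bar u(\beta_k^{1/2}y,\tau+\tau_k)$ converge in $C^\infty_{\mathrm{loc}}(\R^2\times(-\infty,\infty))$ to the soliton $U(y,\tau)=1/(\lambda|y|^2+e^{4\lambda\tau})$ for some $\lambda>0$. Under the rescaling (1.41), both $\Delta\log\tilde q_k$ and $\tilde q_k$ are multiplied by the same factor $\beta_k$, so the scalar curvature is scale invariant and
$-\Delta_y\log\tilde q_k/\tilde q_k$ evaluated at $(0,0)$ equals $\bar R(0,\tau_k)$.
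A direct computation using $U_\tau=\Delta\log U$ gives $U_\tau(0,0)=-4\lambda$ and $U(0,0)=1$, hence $-\Delta\log U/U$ at $(0,0)$ equals $4\lambda$. Since the convergence is $C^\infty_{\mathrm{loc}}$ and $U(0,0)=1>0$, I may pass to the limit to obtain $\bar R(0,\tau_k)\to 4\lambda$. By Lemma 1.13, $\lambda\ge(T+\mu)/2$, so $4\lambda\ge 2(T+\mu)$, contradicting the choice of the sequence.

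The only substantive point is the scale invariance identity $\tilde R_{\tilde q_k}(0,0)=\bar R(0,\tau_k)$ together with the passage to the limit of the second-order quantity $\bar R$ at the origin; both are straightforward consequences of the conformal form of the metric $g_{ij}=u\delta_{ij}$ and the $C^\infty_{\mathrm{loc}}$ convergence in Theorem 1.11. In particular, because the conclusion concerns only the single point $x=0$, no Harnack-type spatial argument is required, and the proof reduces cleanly to the soliton classification already in hand.
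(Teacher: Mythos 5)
Your proof is correct and reconstructs the argument the paper delegates to Lemma 3.5 of \cite{DP2}: the identity $\beta'(\tau)/\beta(\tau)=\2{R}(0,\tau)-2/\tau$, the scale invariance of scalar curvature giving $\4{R}_k(0,0)=\2{R}(0,\tau_k)$, the soliton curvature value $4\lambda$ at the origin, and the lower bound of Lemma 1.13 are combined exactly as the paper itself does later in (1.51)--(1.56) and Proposition 1.19. Every step is sound and this is the intended route.
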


\begin{cor}
Let $\beta(\tau)$ be given by (1.40). Then
\begin{equation*}
\beta(\tau)\ge e^{2(T+\mu)\tau+o(\tau)}\quad\mbox{ as }\tau\to\infty.
\end{equation*}
\end{cor}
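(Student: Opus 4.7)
The plan is to integrate the differential inequality provided by Lemma 1.14, a straightforward Gronwall-type argument with all of the substantial work already packaged into Lemmas 1.7--1.14.

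First I would observe that $\beta(\tau)=1/\2{u}(0,\tau)$ is smooth and strictly positive on $(1/T,\infty)$, because $\2{u}$ is a smooth positive solution of the parabolic equation (1.5); in particular $\log\beta$ is differentiable with $(\log\beta)'(\tau)=\beta'(\tau)/\beta(\tau)$. Given any $\varepsilon>0$, the $\liminf$ statement of Lemma 1.14 produces a constant $\tau_\varepsilon>1/T$ such that $(\log\beta(\tau))'\ge 2(T+\mu)-\varepsilon$ for every $\tau\ge\tau_\varepsilon$. Integrating from $\tau_\varepsilon$ to $\tau$ then gives
\[
\log\beta(\tau)\ge\log\beta(\tau_\varepsilon)+(2(T+\mu)-\varepsilon)(\tau-\tau_\varepsilon)\qquad\forall\tau\ge\tau_\varepsilon,
\]
so $\beta(\tau)\ge C_\varepsilon\,e^{(2(T+\mu)-\varepsilon)\tau}$ for some constant $C_\varepsilon>0$ and all $\tau\ge\tau_\varepsilon$. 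Since $\varepsilon>0$ was arbitrary, this forces $\liminf_{\tau\to\infty}\log\beta(\tau)/\tau\ge 2(T+\mu)$, which is precisely the asserted bound $\beta(\tau)\ge e^{2(T+\mu)\tau+o(\tau)}$ as $\tau\to\infty$.

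I do not anticipate any real obstacle here: the corollary is essentially the integrated form of Lemma 1.14. If one were uncomfortable invoking classical differentiability of $\beta$ (for instance, if only a one-sided Dini derivative were available, as for $\alpha$ in Lemma 1.7), I would fall back on the continuity-and-supremum bootstrap used to derive Corollary 1.8 from Lemma 1.7: for each $\delta\in(0,2(T+\mu))$, use Lemma 1.14 to locate a threshold $\tau_0>1/T$ beyond which the forward difference quotient of $\log\beta$ exceeds $\delta$, run the same supremum-of-admissible-intervals argument to conclude $\beta(\tau)\ge\beta(\tau_0)\,e^{\delta(\tau-\tau_0)}$ for every $\tau\ge\tau_0$, and finally let $\delta\nearrow 2(T+\mu)$.
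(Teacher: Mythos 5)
Your proof is correct and is exactly the argument the paper intends (the paper omits the proof, leaving the integration of Lemma 1.14's $\liminf$ bound to the reader). The fallback via the supremum-of-intervals device from Corollary 1.8 is a thoughtful precaution, but since $\2{u}$ is a smooth positive solution of (1.5), $\log\beta$ is genuinely $C^1$ and the direct integration suffices.
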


As in \cite{DS2} we consider the cylindrical change of variables,
$$
v(\zeta,\theta,t)=r^2u(r,\theta,t),\quad\zeta=\log r, r=|x|
$$
and
let
\begin{equation}
\4{v}(\xi,\theta,\tau)=\tau^2 v(\tau\xi,\theta,t),\quad\tau=1/(T-t),
\tau\ge 1/T.
\end{equation}
Then $\4{v}$ satisfies
\begin{equation}
\tau\4{v}_{\tau}=\frac{1}{\tau}(\log\4{v})_{\xi\xi}
+\tau(\log\4{v})_{\theta\theta}+\xi\4{v}_{\xi}+2\4{v}
\quad\mbox{ in }\R\times [0,2\pi]\times (1/T,\infty).
\end{equation}

\begin{lem}
With the same notation as Theorem 1.11 for any $\3>0$ there exists 
$n_1\in\Z^+$ such that
\begin{equation*}
\left|\4{v}(\xi,\theta,\tau_{k_i})
-\frac{e^{2\tau_{k_i}\xi}}{\lambda e^{2\tau_{k_i}\xi}+\beta_{k_i}}
\right|
<\frac{e^{2\tau_{k_i}\xi}}{\beta_{k_i}}\3
\quad\forall\xi\le\frac{\log\beta_{k_i}}{2\tau_{k_i}},
\theta\in [0,2\pi],i\ge n_1.
\end{equation*}
\end{lem}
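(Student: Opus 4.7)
The plan is to unwind the cylindrical change of variables (1.47)--(1.48) and reduce the claim directly to the Euclidean-coordinate estimate of Lemma 1.12. The essential observation is that if we set $x=(e^{\tau\xi}\cos\theta,e^{\tau\xi}\sin\theta)$, so that $|x|=e^{\tau\xi}$ and $\log|x|=\tau\xi$, then by the definitions in (1.47)--(1.48),
$$
\4{v}(\xi,\theta,\tau)=\tau^2 e^{2\tau\xi}\,u(x,t),\qquad \tau=1/(T-t),
$$
while $\tau_{k_i}^2(T-t_{k_i})^2=1$ and $\tau_{k_i}^2 u(0,t_{k_i})=\2{u}(0,\tau_{k_i})=1/\beta_{k_i}$, where the last equality is the definition (1.40) of $\beta_{k_i}$ together with (1.4).

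First I would apply Lemma 1.12 to the given $\3$ with $M=1$, producing an $n_1\in\Z^+$ such that
$$
\left|u(x,t_{k_i})-\frac{(T-t_{k_i})^2}{\lambda|x|^2+\beta_{k_i}}\right|<u(0,t_{k_i})\,\3
$$
holds for every $|x|\le\beta_{k_i}^{\frac{1}{2}}$ and $i\ge n_1$. The condition $\xi\le\log\beta_{k_i}/(2\tau_{k_i})$ is precisely the translation of $|x|\le\beta_{k_i}^{\frac{1}{2}}$ under the substitution $r=e^{\tau_{k_i}\xi}$, so for any such $\xi$ and any $\theta\in[0,2\pi]$ the point $x=(e^{\tau_{k_i}\xi}\cos\theta,e^{\tau_{k_i}\xi}\sin\theta)$ lies in the range where the estimate holds.

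Finally, I would multiply that inequality through by the positive factor $\tau_{k_i}^2 e^{2\tau_{k_i}\xi}$. The left-hand side becomes
$$
\left|\4{v}(\xi,\theta,\tau_{k_i})-\frac{e^{2\tau_{k_i}\xi}}{\lambda e^{2\tau_{k_i}\xi}+\beta_{k_i}}\right|,
$$
after using $\tau_{k_i}^2(T-t_{k_i})^2=1$ to collapse the model term, and the right-hand side becomes $\tau_{k_i}^2 e^{2\tau_{k_i}\xi}u(0,t_{k_i})\,\3=(e^{2\tau_{k_i}\xi}/\beta_{k_i})\,\3$ after using $\tau_{k_i}^2 u(0,t_{k_i})=1/\beta_{k_i}$, which is exactly the bound stated in the lemma. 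No substantive obstacle is anticipated here: the cylindrical coordinates in (1.47)--(1.48) and the normalisation $\beta_k=1/\2{u}(0,\tau_k)$ are designed so that the constants cancel cleanly, and the only bookkeeping is checking that the advertised $\xi$-range matches the $|x|$-ball where Lemma 1.12 already applies.
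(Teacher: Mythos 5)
Your proposal is correct and matches the paper's proof of Lemma 1.16 almost verbatim: both invoke Lemma 1.12 with $M=1$, use the identity $\4{v}(\xi,\theta,\tau_{k_i})=\tau_{k_i}^2 e^{2\tau_{k_i}\xi}u(e^{\tau_{k_i}\xi},\theta,t_{k_i})$ from (1.47)--(1.48), and then multiply through by $\tau_{k_i}^2 e^{2\tau_{k_i}\xi}$, using $\tau_{k_i}^2(T-t_{k_i})^2=1$ and $\tau_{k_i}^2 u(0,t_{k_i})=1/\beta_{k_i}$ to simplify.
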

\begin{proof}
Let $\3>0$. By Lemma 1.12 there exists $n_1\in\Z^+$ such that (1.46)
holds with $M=1$. Since
$$
\4{v}(\xi,\theta,\tau_{k_i})=\tau_{k_i}^2e^{2\tau_{k_i}\xi}
u(e^{\tau_{k_i}\xi},\theta,t_{k_i})
$$ 
where $t_{k_i}$ is given by (1.10) with $k=k_i$, by (1.46),
\begin{equation*}
\left|\4{v}(\xi,\theta,\tau_{k_i})
-\frac{e^{2\tau_{k_i}\xi}}{\lambda e^{2\tau_{k_i}\xi}+\beta_{k_i}}
\right|
<\tau_{k_i}^2e^{2\tau_{k_i}\xi}u(0,t_{k_i})\,\3
=\frac{e^{2\tau_{k_i}\xi}}{\beta_{k_i}}\3
\end{equation*}
holds for any $\xi\le\log\beta_{k_i}/(2\tau_{k_i})$, $\theta\in
[0,2\pi]$ and $i\ge n_1$ and the lemma follows.
\end{proof}

By (0.5), Corollary 1.15, Lemma 1.16, and an argument similar to the proof 
of Proposition 3.7 of \cite{DP2} we have the following result.

\begin{prop}
Let $\beta(\tau)$ be given by (1.40). Then
$$
\lim_{\tau\to\infty}\frac{\log\beta(\tau)}{\tau}=2(T+\mu).
$$
\end{prop}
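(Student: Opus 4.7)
The lower bound $\liminf_{\tau\to\infty}\log\beta(\tau)/\tau\ge 2(T+\mu)$ follows immediately from Corollary 1.15, so the task is the matching upper bound. I adapt the strategy of Proposition 3.7 of \cite{DP2}: combine mass conservation for the rescaled profile with the inner asymptotics from Lemma 1.16 and an outer upper bound coming from (0.5). The central identity is
$$\int_0^{2\pi}\!\int_{-\infty}^\infty \4{v}(\xi,\theta,\tau)\,d\xi\,d\theta = 4\pi\qquad \forall\,\tau\ge 1/T,$$
obtained from $\int_{\R^2}u(\cdot,t)\,dx=4\pi(T-t)=4\pi/\tau$ via the substitution $\log r=\tau\xi$.

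\textbf{Inner and outer estimates.} Write $\xi_1(\tau):=\log\beta(\tau)/(2\tau)$. For any sequence $\tau_k\to\infty$, pass to the subsequence $\tau_{k_i}$ of Theorem 1.11 along which the $\beta_k$-rescalings converge to $U(y,\tau)=1/(\lam|y|^2+e^{4\lam\tau})$ for some $\lam>0$. Lemma 1.16 and the substitution $s=e^{2\tau\xi}$ then give
$$\int_0^{2\pi}\!\int_{-\infty}^{\xi_1(\tau_{k_i})}\4{v}\,d\xi\,d\theta = \frac{\pi\log(1+\lam)}{\lam\tau_{k_i}}+o(\tau_{k_i}^{-1})\longrightarrow 0.$$
For the outer region, the radial function $W(x,t)=2(t+\mu+\3)/(|x|^2(\log|x|)^2)$ is an exact solution of $W_t=\Delta\log W$ on $\{|x|>1\}$ (from $\Delta\log|x|=0$ and $\Delta\log\log|x|=-1/(|x|^2(\log|x|)^2)$, one obtains $\Delta\log W=2/(|x|^2(\log|x|)^2)=W_t$). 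Hypothesis (0.5) gives $u_0\le W(\cdot,0)$ on $\{|x|\ge R_0(\3)\}$, and a comparison on $\{|x|>R_0\}\times[0,T)$—the boundary inequality on $|x|=R_0$ being enforced by choosing the constant $C$ in $W$ slightly larger than $\mu+\3$ and invoking the Aronson-B\'enilan estimate (0.8)—yields
$$u(x,t)\le \frac{2(T+\mu+\3)}{|x|^2(\log|x|)^2}\qquad\forall\,|x|\ge R_0,\; 0\le t<T.$$
In $\4{v}$-coordinates this reads $\4{v}(\xi,\theta,\tau)\le 2(T+\mu+\3)/\xi^2$ whenever $\tau\xi\ge\log R_0$; since $\beta(\tau)\to\infty$ by Corollary 1.15, this bound holds throughout $\xi\ge\xi_1(\tau)$ once $\tau$ is large.

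\textbf{Combining.} Substituting into the mass identity,
$$4\pi\le o(1)+2\pi\int_{\xi_1(\tau_{k_i})}^\infty\frac{2(T+\mu+\3)}{\xi^2}\,d\xi = o(1)+\frac{4\pi(T+\mu+\3)}{\xi_1(\tau_{k_i})},$$
so $\log\beta(\tau_{k_i})/\tau_{k_i}=2\xi_1(\tau_{k_i})\le 2(T+\mu+\3)+o(1)$. Since every sequence $\tau_k\to\infty$ admits such a subsequence, $\limsup_{\tau\to\infty}\log\beta(\tau)/\tau\le 2(T+\mu+\3)$; letting $\3\to 0$ finishes the upper bound.

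\textbf{Main obstacle.} The most delicate step is justifying the outer comparison $u\le W$ on $\{|x|>R_0\}\times[0,T)$: one must dominate $u$ on the boundary $|x|=R_0$ uniformly in $t<T$. This can be arranged by allowing the constant $C$ in $W$ to be chosen slightly larger than $\mu+\3$ (using the a priori $L^\infty$ bound on $u$ together with (0.8) to bound $u(R_0,t)$ for all $t<T$); any extra margin gets absorbed into $\3$ when the latter is sent to $0$. A secondary care point is that the outer bound must extend down to the transition scale $\xi_1(\tau)$, which is where $\beta(\tau)\to\infty$ enters the argument.
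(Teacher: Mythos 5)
Your overall strategy matches what the paper has in mind (it cites Corollary 1.15, Lemma 1.16, and ``an argument similar to the proof of Proposition 3.7 of \cite{DP2}''): split the mass identity $\int_0^{2\pi}\int_{-\infty}^\infty\4{v}\,d\xi\,d\theta=4\pi$ between the inner region $\xi\le\xi_1(\tau)$ (where Lemma 1.16 gives the profile and the contribution vanishes) and the outer region (controlled by a barrier of the form $2(t+\mu+\3)/(|x|^2(\log|x|)^2)$), then let $\3\to 0$. The lower bound from Corollary 1.15 and the inner-region computation are both correct (the substitution $s=e^{2\tau\xi}$ gives $\pi\log(1+\lambda)/(\lambda\tau_{k_i})$ plus an error $O(\3/\tau_{k_i})$, which goes to zero).

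However, the outer comparison as you set it up has a genuine gap, precisely at the point you flag as ``the main obstacle.'' To enforce $u\le W$ on the lateral boundary $\{|x|=R_0\}\times[0,T)$ you propose to take the constant in $W$ ``slightly larger than $\mu+\3$'' and quote the $L^\infty$ bound on $u$ together with (0.8). This cannot work. At $|x|=R_0$ the barrier satisfies $W(R_0,t)\le 2(T+C)/\bigl(R_0^2(\log R_0)^2\bigr)$, which is \emph{small} once $R_0$ is large (and $R_0$ must be taken large for the initial comparison $u_0\le W(\cdot,0)$ to follow from (0.5)). The a priori bound coming from Lemma 1.1 and mass conservation gives $u(R_0,\cdot,t)\le 4(T-t)/r_2^2$, which is of order $T$ near $t=0$ and has nothing to do with $R_0^{-2}(\log R_0)^{-2}$. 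So with $C$ close to $\mu+\3$ the boundary inequality fails for small $t$; forcing it to hold for all $t\in[0,T)$ requires $C\gtrsim T R_0^2(\log R_0)^2/r_2^2$, which destroys the estimate when fed into the mass identity (you would only get $\log\beta/\tau\le 2(T+C)$). The actual argument must exploit that $u(R_0,\cdot,t)\le 4(T-t)/r_2^2\to 0$ as $t\to T$ while $W(R_0,t)$ stays bounded below, so the boundary inequality becomes true for $t\ge t_\3$ with $t_\3$ close to $T$; one then needs to produce the initial comparison at time $t_\3$ (not at $t=0$), which requires propagating the tail estimate from $t=0$ to $t=t_\3$ by an auxiliary comparison (or, as in \cite{DP2}, a two-step barrier argument). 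As written, your ``extra margin absorbed into $\3$'' claim does not survive quantification, and the proof is incomplete at exactly this step.
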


\begin{prop}
Let $\beta(\tau)$ be given by (1.40). Then
\begin{equation}
\lim_{\tau\to\infty}\frac{\beta'(\tau)}{\beta (\tau)}=2(T+\mu).
\end{equation}
\end{prop}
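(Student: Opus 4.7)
The plan is to establish the matching upper bound $\limsup_{\tau\to\infty}\beta'(\tau)/\beta(\tau)\le 2(T+\mu)$; combined with Lemma 1.14, this yields the desired limit.

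\textbf{Step 1 (key identity).} Differentiating $\log\2{u}(0,\tau)=-\log\beta(\tau)$ in $\tau$ and using equation (1.5) together with the definition $\2{R}=-\Delta\log\2{u}/\2{u}$, one obtains
\[
\frac{\beta'(\tau)}{\beta(\tau)} = \2{R}(0,\tau) - \frac{2}{\tau}.
\]
It therefore suffices to show that $\limsup_{\tau\to\infty}\2{R}(0,\tau)\le 2(T+\mu)$.

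\textbf{Step 2 (subsequential convergence of the scalar curvature at the origin).} Let $\{\sigma_k\}$ be a sequence along which $\beta'(\sigma_k)/\beta(\sigma_k)$ tends to $\limsup_{\tau\to\infty}\beta'(\tau)/\beta(\tau)$. Theorem 1.11 applied to $\{\sigma_k\}$ produces a subsequence $\{\sigma_{k_i}\}$ along which the rescaled solutions $\2{q}_{k_i}$ converge in $C^{\infty}_{loc}$ to $U(y,\tau)=1/(\lambda|y|^2+e^{4\lambda\tau})$ for some $\lambda>0$. Since the scalar curvature is invariant under the parabolic rescaling defining $\2{q}_k$ (a direct chain-rule computation),
\[
\2{R}(0,\sigma_{k_i}) = \2{R}_{\2{q}_{k_i}}(0,0) \longrightarrow \2{R}_U(0,0) = 4\lambda,
\]
so by Step 1, $\beta'(\sigma_{k_i})/\beta(\sigma_{k_i})\to 4\lambda$. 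Lemma 1.13 supplies the lower bound $\lambda\ge(T+\mu)/2$, which is exactly Lemma 1.14 for this subsequence.

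\textbf{Step 3 (matched asymptotics force $\lambda=(T+\mu)/2$).} I would adapt the matched-asymptotics argument of Proposition 3.7 of \cite{DP2} to establish the reverse inequality $\lambda\le(T+\mu)/2$. By Proposition 1.17, the edge of the inner validity satisfies $\log\beta_{k_i}/(2\tau_{k_i})\to T+\mu$. Extending the analysis of Lemma 1.16 to intermediate scales $\xi=\log\beta_{k_i}/(2\tau_{k_i})+a/\tau_{k_i}$ and letting $a\to+\infty$, the inner profile gives $\2{v}(\xi,\theta,\tau_{k_i})\to 1/\lambda$. On the other hand, propagating the initial decay (0.5) through the flow (the subject of Section 2) yields the outer asymptotic $\2{v}(\xi,\theta,\tau)\approx 2(T+\mu)/\xi^2$ for $\xi$ slightly above $T+\mu$, which at the matching value $\xi\to T+\mu$ gives $2/(T+\mu)$. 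Equating the two determinations at the intermediate scale forces
\[
\frac{1}{\lambda}=\frac{2}{T+\mu},\qquad\text{i.e.,}\qquad\lambda=\frac{T+\mu}{2}.
\]
By Step 2, $\beta'(\sigma_{k_i})/\beta(\sigma_{k_i})\to 4\lambda=2(T+\mu)$; since the $\limsup$-achieving sequence was arbitrary, $\limsup_{\tau\to\infty}\beta'(\tau)/\beta(\tau)=2(T+\mu)$, which combined with Lemma 1.14 proves (1.49).

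\textbf{Main obstacle.} The principal difficulty is Step 3, the matched-asymptotics argument. The inner expansion of Lemma 1.16 must be pushed slightly past its stated range of validity, and, more substantively, the outer asymptotic $\2{v}\approx 2(T+\mu)/\xi^2$ must be established with sufficient uniformity in $\tau$ near the matching threshold $\xi=T+\mu$ so that passage to the limit in the intermediate regime is legitimate. A purely soft argument from Proposition 1.17 and Lemma 1.14 cannot suffice, since $\log\beta(\tau)/\tau\to 2(T+\mu)$ and $\liminf\beta'/\beta\ge 2(T+\mu)$ do not in themselves determine $\lim\beta'/\beta$; the additional PDE structure entering through matched asymptotics with the outer decay dictated by (0.5) is essential.
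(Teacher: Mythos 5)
Your Step 1 is exactly the paper's identity (1.51), and your Step 2 is a correct computation, but from there your route diverges sharply from the paper's. The paper proves Proposition 1.18 by a soft argument: from (1.8) and (1.51) one gets that $\beta'/\beta$ is bounded on $[2/T,\infty)$ (this is (1.52)), so any sequence $\tau_k\to\infty$ has a subsequence along which $\beta'/\beta$ converges; the paper then invokes L'Hospital together with Proposition 1.17 (the integrated asymptotic $\log\beta(\tau)/\tau\to 2(T+\mu)$, itself established via the matched-asymptotics argument of Prop.~3.7 of \cite{DP2}) to identify that subsequential limit as $2(T+\mu)$. You instead bypass Proposition 1.17 and try to pin down $\lambda=(T+\mu)/2$ directly --- which is Corollary 1.20 of the paper --- and then read off $\beta'/\beta\to 4\lambda$ from the cigar convergence of Theorem 1.11.

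That inversion has a circularity problem. The outer-region asymptotic $\4{v}\approx 2(T+\mu)/\xi^2$ you invoke in Step 3 is the content of Theorem 2.3 and Lemma 2.1; these rest on Lemma 1.23, which rests on Theorem 1.21, which rests on Corollary 1.20, which rests on Proposition 1.19, which rests on Proposition 1.18 --- the very statement you are proving. In this paper the only outer input available before Proposition 1.18 is the decay hypothesis (0.5) itself, and it is fed into the matching via Corollary 1.15 and Lemma 1.16 to produce the \emph{integrated} rate $\log\beta(\tau)/\tau\to 2(T+\mu)$ of Proposition 1.17, not the curvature constant $\lambda$. So the matched-asymptotics step has to be run at the level of $\log\beta$, as the paper does, and one then has to pass from the integrated limit to the derivative limit. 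Your concluding observation --- that Proposition 1.17 and Lemma 1.14 alone cannot force $\lim\beta'/\beta$ to exist, since $\log\beta=2(T+\mu)\tau+f(\tau)$ with $f$ bounded and $\liminf f'\ge 0$ still allows $\limsup f'>0$ --- is a fair criticism and indicates that the paper's appeal to ``L'Hospital'' is doing more than the textbook rule permits; but the missing ingredient should be additional regularity of $\2{R}(0,\tau)$ coming from the $C^\infty_{loc}$ cigar convergence of Theorem 1.11 (e.g.\ uniform bounds on $\partial_\tau\2{R}(0,\tau)$), not the outer-region profile, on pain of circularity.
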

\begin{proof}
Since 
\begin{equation}
(\log\beta(\tau))_{\tau}=-(\log\2{u}(0,\tau))_{\tau}
=-\frac{\2{u}_{\tau}(0,\tau)}{\2{u}(0,\tau)},
\end{equation}
by (1.8) and (1.51),
\begin{equation}
C\ge\frac{\beta'(\tau)}{\beta(\tau)}\ge-\frac{2}{\tau^2T}-\frac{2}{\tau}
\ge-\frac{3T}{2}\quad\forall\tau\ge 2/T.
\end{equation}
By (1.52) any sequence $\{\tau_k\}_{k=1}^{\infty}$, $\tau_k\to\infty$ 
as $k\to\infty$, will have a subsequence 
$\{\tau_{k_i}\}_{i=1}^{\infty}$ such that the limit
$$
\lim_{i\to\infty}\frac{\beta'(\tau_{k_i})}{\beta(\tau_{k_i})}
$$
exists. By the L'Hospital rule and Proposition 1.17,
$$
\lim_{i\to\infty}\frac{\beta'(\tau_{k_i})}{\beta(\tau_{k_i})}
=2(T+\mu).
$$
Since the sequence $\{\tau_k\}_{k=1}^{\infty}$ is arbitrary,
(1.50) holds.
\end{proof}

\begin{prop}
Let $\tau_k$, $\2{q}_k$, be as given in Theorem 1.11 and let
\begin{equation}
\4{R}_k=-\frac{\Delta\log\2{q}_k}{\2{q}_k}
\end{equation}
Then
$$
\lim_{\tau\to\infty}\4{R}_k(0,0)=2(T+\mu).
$$
\end{prop}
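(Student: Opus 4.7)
The plan is to evaluate $\4{R}_k(0,0)$ by a direct computation and then quote Proposition 1.18; no new estimate is needed. (I read the stated limit $\lim_{\tau\to\infty}$ as $\lim_{k\to\infty}$, since $\4{R}_k(0,0)$ depends only on $k$ once $\tau_k$ is fixed.)

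First I would observe that the scaled function $\2{q}_k(y,\tau)=\beta_k\2{u}(\beta_k^{1/2}y,\tau+\tau_k)$ of (1.41) satisfies the same family of equations (1.5) that the $\2{u}_k$ did in (1.13). Indeed, $\1_\tau\2{q}_k=\beta_k\,\2{u}_\tau$ evaluated at $(\beta_k^{1/2}y,\tau+\tau_k)$, while the chain rule gives $\Delta_y\log\2{q}_k=\beta_k\,\Delta_x\log\2{u}$ at the same point; applying (1.5) yields
\begin{equation*}
\2{q}_{k,\tau}=\Delta\log\2{q}_k+\frac{2\2{q}_k}{\tau+\tau_k}\quad\text{in }\R^2\times(-\tau_k+1/T,\infty).
\end{equation*}
Rearranging and using the definition (1.53) of $\4{R}_k$ gives the identity
\begin{equation*}
\4{R}_k=-\frac{\2{q}_{k,\tau}}{\2{q}_k}+\frac{2}{\tau+\tau_k}.
\end{equation*}

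Next I would specialise to $(y,\tau)=(0,0)$. By the definition $\beta_k=\beta(\tau_k)=1/\2{u}(0,\tau_k)$ we have $\2{q}_k(0,0)=\beta_k\2{u}(0,\tau_k)=1$, and
\begin{equation*}
\frac{\2{q}_{k,\tau}(0,0)}{\2{q}_k(0,0)}=\frac{\2{u}_\tau(0,\tau_k)}{\2{u}(0,\tau_k)}=\left.\frac{d}{d\tau}\log\2{u}(0,\tau)\right|_{\tau=\tau_k}=-\frac{\beta'(\tau_k)}{\beta(\tau_k)},
\end{equation*}
where the last equality uses $\log\2{u}(0,\tau)=-\log\beta(\tau)$ from (1.40). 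Hence
\begin{equation*}
\4{R}_k(0,0)=\frac{\beta'(\tau_k)}{\beta(\tau_k)}+\frac{2}{\tau_k}.
\end{equation*}

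Finally, since $\tau_k\to\infty$ as $k\to\infty$, Proposition 1.18 gives $\beta'(\tau_k)/\beta(\tau_k)\to 2(T+\mu)$, and obviously $2/\tau_k\to 0$, whence $\4{R}_k(0,0)\to 2(T+\mu)$. There is no genuine obstacle here: all the real work has been done in establishing Proposition 1.18 (which rested on Lemma 1.14 and the cylindrical analysis of Proposition 1.17); the present proposition is essentially a reformulation of that limit in terms of the normalised curvature of $\2{q}_k$ at the origin. The only point that deserves a moment's care is verifying that $\2{q}_k$ satisfies the rescaled equation with parameter $\tau+\tau_k$ rather than $\tau+\tau_k^{-1}$ or similar, but this is fixed by checking the scaling factors in (1.41) and (1.5).
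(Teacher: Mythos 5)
Your proof is correct and follows the same route as the paper: derive the scaled equation (1.54) for $\2{q}_k$, use it together with $\2{q}_k(0,0)=1$ and the identity $(\log\beta)'(\tau)=-(\log\2{u}(0,\tau))_\tau$ to obtain $\4{R}_k(0,0)=\beta'(\tau_k)/\beta(\tau_k)+2/\tau_k$, then invoke Proposition 1.18. Your observation that the stated $\lim_{\tau\to\infty}$ should be read as $\lim_{k\to\infty}$ is also the correct reading of the paper's intent.
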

\begin{proof}
By (1.5) $\2{q}_k$ satisfies
\begin{equation}
\2{q}_{k,\tau}=\Delta\log\2{q}_k+\frac{2\2{q}_k}{\tau+\tau_k}\quad
\mbox{ in }\R^2\times (-\tau_k+(1/T),\infty).
\end{equation}
By (1.41), (1.51) and (1.54),
\begin{equation}
\4{R}_k(0,0)=\frac{\beta'(\tau_k)}{\beta(\tau_k)}+\frac{2}{\tau_k}.
\end{equation}
Letting $k\to\infty$ in (1.55), by Proposition 1.18 the corollary follows.
\end{proof}

\begin{cor}
The constant $\lambda$ in Theorem 1.11 is equal to $(T+\mu)/2$.
\end{cor}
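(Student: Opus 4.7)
The plan is to combine the explicit form of the limit $U(y,\tau)=1/(\lambda|y|^2+e^{4\lambda\tau})$ from Theorem 1.11 with the curvature identification from Proposition 1.19. The key point is that $\widetilde R_k$ is built from $\widetilde q_k$ via (1.53), and since $\widetilde q_{k_i}\to U$ in $C^\infty$ on compact sets by Theorem 1.11, we get $\widetilde R_{k_i}(0,0)\to -\Delta\log U(0,0)/U(0,0)$ along the chosen subsequence. On the other hand, by Proposition 1.19 the full limit $\lim_{k\to\infty}\widetilde R_k(0,0)=2(T+\mu)$ exists and is independent of the subsequence.

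Concretely, I would first compute the scalar curvature of the metric associated to $U$ at $(y,\tau)=(0,0)$. Writing $h(y,\tau)=\lambda|y|^2+e^{4\lambda\tau}$, so that $\log U=-\log h$, a direct calculation gives $\Delta\log h=\Delta h/h-|\nabla h|^2/h^2=4\lambda/h-4\lambda^2|y|^2/h^2$. Evaluating at $y=0$, $\tau=0$, where $h=1$, yields $\Delta\log U(0,0)=-4\lambda$ and $U(0,0)=1$, so
\begin{equation*}
-\frac{\Delta\log U}{U}(0,0)=4\lambda.
\end{equation*}

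Next I would pass this through the $C^\infty$ convergence $\widetilde q_{k_i}\to U$ on compact neighborhoods of $(0,0)$, which by (1.53) forces
\begin{equation*}
\widetilde R_{k_i}(0,0)\longrightarrow 4\lambda\qquad\text{as }i\to\infty.
\end{equation*}
But Proposition 1.19, applied to the same sequence $\{\tau_k\}$, tells us that $\widetilde R_k(0,0)\to 2(T+\mu)$ as $k\to\infty$, and in particular along the subsequence $k_i$. Equating the two limits gives $4\lambda=2(T+\mu)$, i.e., $\lambda=(T+\mu)/2$.

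There is essentially no obstacle here: all the heavy lifting (extraction of the subsequence, identification of the limit as the soliton, and the curvature limit $2(T+\mu)$) has already been done in Theorem 1.11 and Proposition 1.19. The only mild care required is to note that although Theorem 1.11 only guarantees subsequential convergence of $\widetilde q_k$ to a soliton $U$ with a priori subsequence-dependent parameter $\lambda$, Proposition 1.19 pins down the curvature at the basepoint unconditionally, so $\lambda$ must in fact be independent of the subsequence and equal to $(T+\mu)/2$.
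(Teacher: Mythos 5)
Your argument is correct and follows essentially the same route as the paper: use the $C^\infty$ convergence from Theorem 1.11 and (1.53) to get $\4{R}_{k_i}(0,0)\to -\Delta\log U(0,0)/U(0,0)=4\lambda$, then combine with Proposition 1.19's limit $2(T+\mu)$ to conclude $\lambda=(T+\mu)/2$. The explicit computation of $-\Delta\log U/U$ at $(0,0)$ that you supply is a welcome detail the paper leaves implicit, but there is no difference in method.
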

\begin{proof}
Let $\tau_k$, $\2{q}_{k}$, $\2{q}_{k_i}$, and $U(y,\tau)$ be as in 
Theorem 1.11 and let $\4{R}_k$ be given by (1.53). Then by Theorem 1.11,
\begin{equation}
\lim_{i\to\infty}\4{R}_{k_i}(0,0)=-\lim_{i\to\infty}
\frac{\Delta\log\2{q}_{k_i}}{\2{q}_{k_i}}(0,0)
=-\frac{\Delta\log U}{U}(0,0)=4\lambda.
\end{equation}
By Proposition 1.19 and (1.56) the corollary follows.
\end{proof}

By Theorem 1.11 and Corollary 1.20 we have the following main theorem
of this section.

\begin{thm}
Let $\beta(\tau)$ be given by (1.40) and let 
$$
\2{q}(y,\tau)=\beta (\tau)\2{u}(\beta(\tau)^{\frac{1}{2}}y,\tau)
$$
where $\2{u}$ is given by (1.4). Then $\2{q}(y,\tau)$ converges uniformly 
on $C^{\infty}(K)$ for any compact set $K\subset\R^2$ to the function 
\begin{equation*}
U_{\mu}(y)=\frac{1}{\frac{(T+\mu)}{2}|y|^2+1}
\end{equation*}
as $\tau\to\infty$.
\end{thm}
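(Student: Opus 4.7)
The plan is to deduce the stated full limit from the subsequential limit already established in Theorem 1.11 and Corollary 1.20 by a standard contradiction/diagonal argument. The key observation is the identity
\[
\2{q}(y,\tau)=\beta(\tau)\2{u}(\beta(\tau)^{\frac{1}{2}}y,\tau)=\2{q}_k(y,0)\Big|_{\tau_k=\tau,\beta_k=\beta(\tau)},
\]
where $\2{q}_k$ is the family defined in Theorem 1.11. Thus understanding the behaviour of $\2{q}(\cdot,\tau)$ as $\tau\to\infty$ is equivalent to controlling the time-zero slice of $\2{q}_k$ for arbitrary sequences $\tau_k\to\infty$.

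Suppose, for contradiction, that $\2{q}(\cdot,\tau)$ does not converge to $U_\mu$ uniformly in $C^\infty(K)$ for some compact $K\subset\R^2$ as $\tau\to\infty$. Then there exist a compact set $K\subset\R^2$, an integer $m\ge 0$, a constant $\3_0>0$, and a sequence $\tau_k\to\infty$ such that
\[
\|\2{q}(\cdot,\tau_k)-U_\mu\|_{C^m(K)}\ge\3_0\quad\forall k\in\Z^+.
\]
Apply Theorem 1.11 to this sequence $\{\tau_k\}$: with $\beta_k=\beta(\tau_k)$ and $\2{q}_k(y,\tau)=\beta_k\2{u}(\beta_k^{\frac{1}{2}}y,\tau+\tau_k)$, there is a subsequence $\{\2{q}_{k_i}\}$ converging uniformly in $C^\infty(K')$ for every compact $K'\subset\R^2\times(-\infty,\infty)$ to a function
\[
U(y,\tau)=\frac{1}{\lambda|y|^2+e^{4\lambda\tau}}
\]
for some $\lambda>0$. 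By Corollary 1.20 the constant must be $\lambda=(T+\mu)/2$, so $U(y,0)=U_\mu(y)$.

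Specializing the $C^\infty_{loc}$ convergence to the time slice $\tau=0$ and to the compact set $K\times\{0\}$, we obtain
\[
\2{q}(y,\tau_{k_i})=\2{q}_{k_i}(y,0)\longrightarrow U(y,0)=U_\mu(y)\quad\text{uniformly in }C^m(K),
\]
which contradicts the choice of $\3_0$. Hence $\2{q}(\cdot,\tau)\to U_\mu$ in $C^\infty_{loc}(\R^2)$ as $\tau\to\infty$, proving the theorem. The substantive work has all been carried out in Theorem 1.11 and Corollary 1.20; the only thing to check here is the bookkeeping that the identification of $\lambda$ as $(T+\mu)/2$ forces every subsequential limit to coincide, so no genuine obstacle remains.
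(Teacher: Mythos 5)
Your proposal is correct and is exactly the argument the paper is implicitly invoking; the paper states Theorem 1.21 with only the phrase ``By Theorem 1.11 and Corollary 1.20,'' and your contradiction/diagonal argument is the standard way to upgrade the subsequential convergence of Theorem 1.11 to full convergence once Corollary 1.20 pins down the unique value $\lambda=(T+\mu)/2$ for every subsequential limit.
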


Since
$$
(\log\beta(\tau))_{\tau}=\2{R}(0,\tau)-\frac{2}{\tau},
$$
by (1.5), by Proposition 1.18 we have the following result.

\begin{prop}
Let $\beta(\tau)$ be given by (1.40). Then
$$
\lim_{\tau\to\infty}\2{R}(0,\tau)=2(T+\mu).
$$
\end{prop}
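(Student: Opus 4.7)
The plan is to derive an exact algebraic identity relating $\2{R}(0,\tau)$ to the logarithmic derivative $\beta'(\tau)/\beta(\tau)$, and then invoke Proposition 1.18 to conclude. The hint in the preceding sentence of the paper (``Since $(\log\beta(\tau))_{\tau}=\2{R}(0,\tau)-\frac{2}{\tau}$\,\ldots'') essentially gives away the route.

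First I would differentiate the definition $\beta(\tau)=1/\2{u}(0,\tau)$ in the form of (1.51):
\[
\frac{\beta'(\tau)}{\beta(\tau)} = (\log\beta(\tau))_{\tau} = -\frac{\2{u}_\tau(0,\tau)}{\2{u}(0,\tau)}.
\]
Then I would substitute the evolution equation (1.5), which reads $\2{u}_\tau = \Delta\log\2{u} + \frac{2\2{u}}{\tau}$, evaluated at $x=0$. Dividing by $\2{u}(0,\tau)$ and using the definition $\2{R}(x,\tau) = -\Delta\log\2{u}/\2{u}$ gives
\[
\frac{\2{u}_\tau(0,\tau)}{\2{u}(0,\tau)} = -\2{R}(0,\tau) + \frac{2}{\tau}.
\]
Combining these two identities yields the exact relation
\[
\frac{\beta'(\tau)}{\beta(\tau)} = \2{R}(0,\tau) - \frac{2}{\tau}, \qquad \tau > 1/T,
\]
valid pointwise.

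Finally I would solve for $\2{R}(0,\tau)$ and let $\tau\to\infty$. Since $2/\tau \to 0$ and $\beta'(\tau)/\beta(\tau) \to 2(T+\mu)$ by Proposition 1.18, we obtain $\2{R}(0,\tau) \to 2(T+\mu)$, which is exactly the claim. There is no genuine obstacle here: the identity is algebraic once (1.5) is used, and all of the analytic work (the exponential growth of $\beta$, the L'H\^opital step that upgraded Lemma 1.14 to Proposition 1.18) has already been done. The only thing to be slightly careful about is that (1.5) holds classically, so the substitution at $x=0$ is justified without any regularization argument.
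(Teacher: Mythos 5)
Your proposal is correct and follows exactly the same route as the paper: both derive the identity $(\log\beta(\tau))_\tau = \2{R}(0,\tau) - 2/\tau$ from (1.5) and (1.51), and then conclude by Proposition 1.18 together with $2/\tau\to 0$. The paper states this so tersely (the identity is given in the sentence preceding the proposition, and the conclusion is immediate) that your write-up is simply a fuller version of the same argument.
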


By Theorem 1.21 and an argument similar to the proof of Lemma 1.16
we have the following result.

\begin{lem}
Let $\beta(\tau)$ be given by (1.40) and let $\4{v}$ be given by (1.48). 
Then for any $\3>0$ there exists $\tau_0>1/T$ such that
\begin{equation*}
\left|\4{v}(\xi,\theta,\tau)
-\frac{e^{2\tau\xi}}{\frac{T+\mu}{2}e^{2\tau\xi}+\beta (\tau)}
\right|
<\frac{e^{2\tau\xi}}{\beta(\tau)}\3\quad\forall\xi
\le\frac{\log\beta (\tau)}{2\tau},\theta\in [0,2\pi],\tau\ge\tau_0.
\end{equation*}
\end{lem}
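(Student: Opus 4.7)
The plan is to mimic the proof of Lemma 1.16, but since Theorem 1.21 gives convergence of $\tilde{q}(y,\tau)$ along the full parameter $\tau$ (rather than only along a subsequence as in Lemma 1.12), we obtain a bound valid for every sufficiently large $\tau$ instead of a sequential statement. No new estimates are required; the work is essentially a sequence of change-of-variable identities together with an application of Theorem 1.21 to the compact ball $\bar{B}_1\subset\R^2$.

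First, fix $\3>0$. By Theorem 1.21, applied with $M=1$, there is $\tau_0>1/T$ such that
\begin{equation*}
\biggl|\2{q}(y,\tau)-\frac{1}{\frac{T+\mu}{2}|y|^2+1}\biggr|<\3
\quad\forall |y|\le 1,\,\tau\ge\tau_0.
\end{equation*}
Substituting $x=\beta(\tau)^{1/2}y$ in $\2{q}(y,\tau)=\beta(\tau)\2{u}(\beta(\tau)^{1/2}y,\tau)$ and dividing by $\beta(\tau)$, the display becomes
\begin{equation*}
\biggl|\2{u}(x,\tau)-\frac{1}{\frac{T+\mu}{2}|x|^2+\beta(\tau)}\biggr|
<\frac{\3}{\beta(\tau)}\quad\forall |x|\le\beta(\tau)^{1/2},\,\tau\ge\tau_0.
\end{equation*}
Writing $t=T-1/\tau$ and using $\2{u}(x,\tau)=\tau^2u(x,t)$ together with $u(0,t)=\tau^{-2}/\beta(\tau)$, this translates to a pointwise bound on $u(x,t)$ analogous to (1.46), now valid for all large $\tau$.

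Next, I pass to the cylindrical variable $\zeta=\log r$ and the rescaled variable $\xi=\zeta/\tau$. With $x=e^{\tau\xi}$ (allowing any angle $\theta\in[0,2\pi]$), the definition (1.48) gives
\begin{equation*}
\4{v}(\xi,\theta,\tau)=\tau^2e^{2\tau\xi}u(e^{\tau\xi},\theta,t).
\end{equation*}
Multiplying both sides of the previous displayed bound by $\tau^2e^{2\tau\xi}$ and computing
\begin{equation*}
\tau^2e^{2\tau\xi}\cdot\frac{(T-t)^2}{\frac{T+\mu}{2}|x|^2+\beta(\tau)}
=\frac{e^{2\tau\xi}}{\frac{T+\mu}{2}e^{2\tau\xi}+\beta(\tau)},
\qquad
\tau^2e^{2\tau\xi}\cdot\frac{(T-t)^2\3}{\beta(\tau)}
=\frac{e^{2\tau\xi}}{\beta(\tau)}\3,
\end{equation*}
yields exactly the desired inequality. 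The constraint $|x|\le\beta(\tau)^{1/2}$ becomes $e^{\tau\xi}\le\beta(\tau)^{1/2}$, i.e.\ $\xi\le\log\beta(\tau)/(2\tau)$.

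There is no real obstacle: the only subtlety is making sure that the convergence used is the full (non-subsequential) statement of Theorem 1.21, which removes the need to work along $\{\tau_{k_i}\}$ as in Lemma 1.16, and that the range of admissible $\xi$ matches the range $|y|\le 1$ on which the approximation of $\2{q}$ by $U_\mu$ is asserted. Both points are immediate from the statement of Theorem 1.21.
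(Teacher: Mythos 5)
Your proof is correct and matches the paper's intended argument: the paper explicitly says Lemma 1.23 follows "by Theorem 1.21 and an argument similar to the proof of Lemma 1.16," and you do precisely that — apply the full (non-sequential) convergence of Theorem 1.21 on $\overline{B}_1$ to get the analogue of (1.46), then multiply by $\tau^2 e^{2\tau\xi}$ and use $\tau^2(T-t)^2=1$ and $\beta(\tau)=1/(\tau^2 u(0,t))$ exactly as in Lemma 1.16.
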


By Corollary 1.15, Proposition 1.17 and Lemma 1.23 we get the following 
result.

\begin{cor}
Let $\beta(\tau)$ be given by (1.40) and let $\4{v}$ be given by (1.48). 
Then
$$
\int_{-\infty}^{\xi^-}\int_0^{2\pi}\4{v}(\xi,\theta,\tau)\,d\theta
\,d\xi\to 0\quad\mbox{ as }\tau\to\infty
$$
and
$$
\lim_{\tau\to\infty}\4{v}(\xi,\theta,\tau)=0\quad\mbox{ uniformly on }
(-\infty,\xi^-]\times [0,2\pi]
$$
for any $\xi^-<T+\mu$.
\end{cor}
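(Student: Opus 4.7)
The plan is to bound $\4{v}$ on the slab $(-\infty,\xi^-]\times[0,2\pi]$ by the explicit profile appearing in Lemma 1.23, and then use the sharp growth rate of $\beta(\tau)$ (from Proposition 1.17 and Corollary 1.15) to show that this profile is uniformly small on the slab and integrable to a quantity that vanishes as $\tau\to\infty$. The strict inequality $\xi^-<T+\mu$ will supply the decay margin needed.

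First I would fix $\xi^-<T+\mu$ and pick any $\3>0$. By Proposition 1.17, $\log\beta(\tau)/(2\tau)\to T+\mu$, so there is a $\tau_1\ge\tau_0(\3)$ with $\log\beta(\tau)/(2\tau)>\xi^-$ for all $\tau\ge\tau_1$. Then for every $\xi\le\xi^-$ the hypothesis of Lemma 1.23 is met, and the triangle inequality gives
$$
0\le\4{v}(\xi,\theta,\tau)\le\frac{e^{2\tau\xi}}{\frac{T+\mu}{2}e^{2\tau\xi}+\beta(\tau)}+\3\frac{e^{2\tau\xi}}{\beta(\tau)}\le(1+\3)\frac{e^{2\tau\xi}}{\beta(\tau)}
$$
for every $\xi\le\xi^-$, $\theta\in[0,2\pi]$ and $\tau\ge\tau_1$.

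Next, for the uniform limit I would bound $e^{2\tau\xi}\le e^{2\tau\xi^-}$ on the slab. By Corollary 1.15 (sharpened by Proposition 1.17), $\log\beta(\tau)=2(T+\mu)\tau+o(\tau)$, so
$$
\frac{e^{2\tau\xi^-}}{\beta(\tau)}=e^{-2\tau(T+\mu-\xi^-)+o(\tau)}\longrightarrow 0\quad\mbox{ as }\tau\to\infty,
$$
because $T+\mu-\xi^->0$. This yields $\4{v}\to 0$ uniformly on $(-\infty,\xi^-]\times[0,2\pi]$.

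Finally, for the integral I would integrate the pointwise bound directly:
$$
\int_{-\infty}^{\xi^-}\int_0^{2\pi}\4{v}(\xi,\theta,\tau)\,d\theta\,d\xi\le 2\pi(1+\3)\int_{-\infty}^{\xi^-}\frac{e^{2\tau\xi}}{\beta(\tau)}\,d\xi=\frac{\pi(1+\3)e^{2\tau\xi^-}}{\tau\beta(\tau)},
$$
which vanishes as $\tau\to\infty$ by the estimate above. The only mild subtlety is confirming that the range $\xi\le\xi^-$ is eventually contained in the validity range $\xi\le\log\beta(\tau)/(2\tau)$ of Lemma 1.23; this is exactly what the strict gap $\xi^-<T+\mu$ combined with Proposition 1.17 provides, and the rest is routine integration of a decaying exponential. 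I do not anticipate a serious obstacle here, since Lemma 1.23 already does the analytic work.
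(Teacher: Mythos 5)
Your argument is correct and uses precisely the ingredients the paper cites for this corollary (Corollary 1.15, Proposition 1.17, Lemma 1.23): Proposition 1.17 guarantees that $\xi\le\xi^-<T+\mu$ eventually falls inside the validity range $\xi\le\log\beta(\tau)/(2\tau)$ of Lemma 1.23, the resulting bound $\4{v}\le(1+\3)e^{2\tau\xi}/\beta(\tau)$ is uniformly small on the slab by Corollary 1.15, and the integral over $(-\infty,\xi^-]$ of the decaying exponential converges and vanishes. This is the intended proof; the paper simply leaves it as an exercise.
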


\section{Outer region behaviour}
\setcounter{equation}{0}
\setcounter{thm}{0}

In this section we will prove the behaviour of the maximal solution in 
the outer region without using the Hamilton-Yau Harnack inequality 
for surfaces \cite{HY}. By (0.5), Proposition 1.17, Lemma 1.23, 
Corollary 1.24 and an argument similar to the proof of Lemma 4.1 of 
\cite{DS2} we have the following lemma.

\begin{lem}
Let $\xi(\tau)=(\log\beta(\tau))/2\tau$ with $\beta (\tau)$ given by
(1.40). Let $\4{v}$ be given by (1.48). Then there exists constants 
$C_1>0$, $C_2>0$, $C_3>0$ and $\tau_0>1/T$ such that the following holds.
\begin{enumerate}
\item[(i)]\quad$\4{v}(\xi,\theta,\tau)\le C_1\quad\forall\xi\in\R,
\theta\in [0,2\pi],\tau\ge 1/T$
\item[(ii)]\quad$\4{v}(\xi,\theta,\tau)\ge\frac{C_2}{\xi^2}\quad
\forall\xi\ge\xi(\tau),
\theta\in [0,2\pi],\tau\ge\tau_0$
\item[(iii)]\quad$\4{v}(\xi,\theta,\tau)\le\frac{C_3}{\xi^2}\quad
\forall\xi>0,
\theta\in [0,2\pi],\tau\ge\tau_0$.
\end{enumerate}
Moreover
\begin{equation*} 
\xi (\tau)=T+\mu+o(1)\quad\mbox{ as }\tau\to\infty.
\end{equation*} 
\end{lem}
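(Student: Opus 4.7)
The plan is to convert the three bounds on $\4{v}$ into bounds on $r^2u(r,\theta,t)$ via the identity
\begin{equation*}
\4{v}(\xi,\theta,\tau)=\tau^2e^{2\tau\xi}u(e^{\tau\xi},\theta,T-1/\tau)=e^{2\tau\xi}\2{u}(e^{\tau\xi},\theta,\tau),
\end{equation*}
and then to establish matching ``outer'' upper and lower bounds of size $1/(\log r)^2$ on $r^2u(r,\theta,t)$ when $r$ is large, uniformly in $t\in(0,T)$. These outer bounds are to be propagated in time by comparing $u$ with nearly stationary radial profiles of the form $A/(r^2(\log r)^2)$, whose initial magnitude is fixed by the decay assumption (0.5).

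For part (iii) I would construct a supersolution $W(r)=A/(r^2(\log r)^2)$ of $u_t=\Delta\log u$ on $\{r\ge R_0\}$ with $A>2\mu$ and $R_0$ large; a direct computation shows that $\Delta\log W$ exhibits a leading-order cancellation with the remainder of order $1/(r^2(\log r)^3)$ having the sign needed to make $W$ a supersolution for $r\ge R_0$. Assumption (0.5) gives $u_0\le W$ on $\{r\ge R_0\}$, and Lemma 1.1 together with the pointwise control of $u$ on $\{r=R_0\}$ (from classical smoothness and $u\le\|u_0\|_\infty$) lets the maximum principle extend this to $u(r,\theta,t)\le W(r)$ on $\{r\ge R_0\}\times(0,T)$. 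Substituting $r=e^{\tau\xi}$ gives $\4{v}\le C/\xi^2$ whenever $e^{\tau\xi}\ge R_0$; for $0<\xi<\log R_0/\tau$ the bound is trivial since $r^2u$ is bounded there. Part (ii) is dual: I would construct a subsolution $\5{W}(r)=a/(r^2(\log r)^2)$ with $0<a<2\mu$, use (0.5) to get $u_0\ge\5{W}$ for $r$ large, and apply comparison together with Lemma 1.1 to conclude $u(r,\theta,t)\ge c/(r^2(\log r)^2)$ for $r\ge R_0$. Since $\xi\ge\xi(\tau)$ forces $e^{\tau\xi}\ge\beta(\tau)^{1/2}\to\infty$ (by Corollary 1.15), this yields $\4{v}\ge C_2/\xi^2$ for $\tau$ large. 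The case $\mu=0$ requires using Lemma 1.23 and Corollary 1.24 directly to get a lower bound at $\xi=\xi(\tau)$ which then serves as a pinning condition for the subsolution argument.

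For part (i) I split into $\xi\le\xi(\tau)$ and $\xi>\xi(\tau)$. In the first region Lemma 1.1 gives $\2{u}(y,\tau)\le 1/\alpha(\tau)$, and $\alpha(\tau)\sim\beta(\tau)$ (from (1.42) combined with Lemma 1.10) yields
\begin{equation*}
\4{v}(\xi,\theta,\tau)\le e^{2\tau\xi}/\alpha(\tau)\le Ce^{2\tau(\xi-\xi(\tau))}\le C.
\end{equation*}
In the second region, part (iii) combined with Proposition 1.17 (which gives $\xi(\tau)\to T+\mu>0$, hence $\xi(\tau)\ge c_0>0$ for $\tau\ge\tau_0$) gives $\4{v}\le C_3/\xi(\tau)^2\le C_3/c_0^2$. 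The residual range $\tau\in[1/T,\tau_0]$ is controlled by continuity and the uniform $L^\infty$ bound on $u$. The ``moreover'' statement is precisely Proposition 1.17 rewritten in terms of $\xi(\tau)=(\log\beta(\tau))/(2\tau)$.

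The main obstacle will be making the sub/supersolution arguments for (ii) and (iii) rigorous: the profile $1/(r^2(\log r)^2)$ is only an approximate stationary solution of $u_t=\Delta\log u$, so the logarithmic correction in $\Delta\log W$ must be carefully dominated by the choice of $A$ (respectively $a$), and the comparisons must be anchored by the inner-region asymptotics from Lemma 1.23 and Corollary 1.24. The degenerate case $\mu=0$ is the most delicate, since the natural amplitude of the subsolution in (0.5) vanishes and a direct comparison with a fixed profile is unavailable; here one would patch the lower bound at $\xi=\xi(\tau)$ coming from Lemma 1.23 to the outer profile through an auxiliary subsolution whose amplitude varies slowly with $\tau$.
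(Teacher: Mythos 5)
Your overall plan --- anchor the inner edge with Lemma 1.23 and Corollary 1.24, push outward via barriers built from the profile $1/(r^2(\log r)^2)$ and the decay hypothesis (0.5), and read off the ``moreover'' statement from Proposition 1.17 --- matches the ingredients the paper cites. The paper gives no detailed proof of this lemma (it defers to Lemma 4.1 of \cite{DS2}), so the barrier construction you outline is a reasonable reconstruction, and the decomposition of (i) into $\xi\le\xi(\tau)$ (where Lemma 1.1 and $\alpha(\tau)/\beta(\tau)\to 1$ control $\bar u$) versus $\xi>\xi(\tau)$ (where (iii) and $\xi(\tau)\to T+\mu>0$ take over) is the right split.

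There is, however, a concrete error in the barrier computation for part (iii). You claim that for $W(r)=A/(r^2(\log r)^2)$, ``$\Delta\log W$ exhibits a leading-order cancellation with the remainder of order $1/(r^2(\log r)^3)$ having the sign needed to make $W$ a supersolution.'' This is false. Writing $\log W=\log A-2\log r-2\log\log r$ and using that $\log r$ is harmonic in $\R^2\setminus\{0\}$ together with $\Delta(\log\log r)=-1/(r^2(\log r)^2)$, one gets exactly
\begin{equation*}
\Delta\log W=\frac{2}{r^2(\log r)^2},
\end{equation*}
with no cancellation: this is the leading-order term, and it is \emph{positive}. Since $W$ is time-independent, $W_t-\Delta\log W=-2/(r^2(\log r)^2)<0$, so your $W$ is a \emph{sub}solution, not a supersolution, and the comparison you set up for (iii) runs in the wrong direction. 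The repair is to use a time-dependent barrier $V(r,t)=(c_0+2t)/(r^2(\log r)^2)$, which by the same computation is an exact solution of $u_t=\Delta\log u$ for $r>1$; choosing $c_0\ge\max(2\mu+\varepsilon,\,R_0^2(\log R_0)^2\|u_0\|_\infty)$ handles both the initial comparison forced by (0.5) and the fixed-boundary comparison at $r=R_0$, and one then reads off (iii) as you do by substituting $r=e^{\tau\xi}$. Note that this same sign computation is good news for (ii): the static $\underline W(r)=a/(r^2(\log r)^2)$ \emph{is} a subsolution, so the direction of your lower-bound comparison is correct; the remaining delicacy there is the moving boundary $r=\beta(\tau)^{1/2}$ at which you pin the subsolution using Lemma 1.23, which you acknowledge but do not carry out, and which is indeed the heart of why this lemma (and the corresponding one in \cite{DS2}) is nontrivial.
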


\begin{lem}
For any $b>a>T+\mu$, there exist constants $C>0$ and $\tau_1>1/T$
such that
\begin{equation}
\max_{a\le\xi\le b}\left|\int_0^{2\pi}(\log\4{v})_{\xi}
(\xi,\theta,\tau)\,d\theta\right|\le C\quad\forall
\tau\ge\tau_1.
\end{equation}
\end{lem}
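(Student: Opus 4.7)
The plan is to integrate~(1.49) over $\theta\in[0,2\pi]$ and once more over $\xi$, producing an integral representation for the quantity to control. Set
\[
\tilde{F}(\xi,\tau):=\int_{0}^{2\pi}\4{v}(\xi,\theta,\tau)\,d\theta,\qquad\tilde{G}(\xi,\tau):=\int_{0}^{2\pi}\log\4{v}(\xi,\theta,\tau)\,d\theta,
\]
so the lemma asks for a uniform bound on $\tilde{G}_{\xi}$ on $[a,b]$. Averaging~(1.49) over $\theta$ kills the $\tau(\log\4{v})_{\theta\theta}$ term by periodicity, giving $\tau\tilde{F}_{\tau}=\tau^{-1}\tilde{G}_{\xi\xi}+\xi\tilde{F}_{\xi}+2\tilde{F}$, that is
\[
\tilde{G}_{\xi\xi}=\tau^{2}\tilde{F}_{\tau}-\tau\xi\tilde{F}_{\xi}-2\tau\tilde{F}.
\]

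First I would invoke Lemma~2.1: because $a>T+\mu=\lim\xi(\tau)$, parts (ii) and (iii) give positive upper and lower bounds for $\4{v}$ on $[a,b]\times[0,2\pi]$ for $\tau$ sufficiently large, while the global bound $\4{v}\le C_{3}/\xi^{2}$ for $\xi>0$ forces $\tilde{F}(\xi,\tau)$ and $\xi\tilde{F}(\xi,\tau)$ to tend to $0$ as $\xi\to\infty$. An asymptotic analysis of $(\log\4{v})_{\xi}$ as $\xi\to\infty$, using the profile $\4{v}\sim c/\xi^{2}$, also gives $\tilde{G}_{\xi}\to0$ there. Integrating the displayed identity from $\xi\in[a,b]$ to $\infty$ and integrating by parts in the $\tau\xi\tilde{F}_{\xi}$ term produces
\[
\tilde{G}_{\xi}(\xi,\tau)=-\tau^{2}\!\int_{\xi}^{\infty}\!\tilde{F}_{\tau}(\eta,\tau)\,d\eta-\tau\xi\tilde{F}(\xi,\tau)+\tau\!\int_{\xi}^{\infty}\!\tilde{F}(\eta,\tau)\,d\eta.
\]

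The last two terms are individually of size $\tau/\xi$ by Lemma~2.1, but because $\tilde{F}$ is well approximated by a profile $c(\tau)/\xi^{2}$ their leading $\tau c(\tau)/\xi$ contributions cancel, leaving an $O(1)$ residual. For the first term I would use mass conservation $\int_{\mathbb{R}}\tilde{F}(\xi,\tau)\,d\xi=4\pi$ (which follows from~(0.2) with $f\equiv2$ after the substitution $\zeta=\log r,\ \xi=\zeta/\tau$) together with Corollary~1.24, which forces the inner-side tail $\int_{-\infty}^{\xi^{-}}\tilde{F}\,d\xi\to0$ for every $\xi^{-}<T+\mu$; these pin $\int_{\xi}^{\infty}\tilde{F}\,d\eta$ tightly enough that $\tau^{2}$ times its $\tau$-derivative stays bounded.

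The main obstacle is this last step: pointwise bounds on $\tilde{F}$ do not control $\tilde{F}_{\tau}$, so the sharpest route is to integrate~(1.49) against a smooth cutoff supported slightly outside $[a,b]$ and convert $\int\tilde{F}_{\tau}$ into boundary flux terms in $\xi$, which can then be estimated directly from Lemma~2.1. The rest of the proof is then routine bookkeeping with the three terms in the representation formula.
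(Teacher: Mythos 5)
Your proposal has a genuine gap. You work from the evolution equation~(1.49): after averaging over $\theta$ and integrating in $\xi$ from a point to infinity you are left with the term $\tau^{2}\int_{\xi}^{\infty}\tilde{F}_{\tau}\,d\eta$. You correctly flag this as the main obstacle, but the proposed workaround does not close it. Integrating~(1.49) against a $\xi$-cutoff to ``convert $\int\tilde{F}_{\tau}$ into boundary flux terms'' is circular: the only $\xi$-boundary flux that~(1.49) produces when you integrate the $(\log\4{v})_{\xi\xi}$ term by parts is exactly $\int_{0}^{2\pi}(\log\4{v})_{\xi}\,d\theta$, the quantity you are trying to bound, while the quantity $\int\phi\tilde{F}_{\tau}\,d\xi$ is the $\tau$-derivative of a $\tau$-dependent integral that pointwise bounds on $\tilde{F}$ from Lemma~2.1 cannot control. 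There is also a secondary flaw: your representation formula assumes $\tilde{G}_{\xi}(\infty,\tau)=0$, but Lemma~2.1 only gives $C_{2}/\xi^{2}\le\4{v}\le C_{3}/\xi^{2}$, which forces $\log\4{v}+2\log\xi$ to stay bounded without forcing its $\xi$-derivative to vanish. Finally, the claimed cancellation between $\tau\xi\tilde{F}(\xi)$ and $\tau\int_{\xi}^{\infty}\tilde{F}$ would require $\tilde{F}$ to match a $c/\xi^{2}$ profile to within $o(1/\xi^{2})$, which the two-sided bound with distinct constants $C_{2}<C_{3}$ does not give you.

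The paper's proof avoids the $\tau$-derivative entirely by using, instead of~(1.49), the scalar curvature identity in cylindrical coordinates, $(\log\4{v})_{\xi\xi}+\tau^{2}(\log\4{v})_{\theta\theta}=-R(e^{\tau\xi},\theta,t)\4{v}$. Integrating in $\theta$ kills the $\theta\theta$-term (as in your proposal), and one further integration in $\xi$ from $\xi_{1}$ to $\xi_{2}$ expresses the difference of the boundary flux at $\xi_{2}$ and at $\xi_{1}$ as $-\int\!\!\int R\4{v}$. The crucial extra input, which your argument never invokes, is the uniform curvature lower bound $R\ge -1/(t+\mu)$ from~(1.47); combined with $\4{v}\le C_{1}$ from Lemma~2.1(i), this gives the one-sided estimate~(2.3). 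A two-sided bound then follows not from decay at infinity but from a short-range averaging trick: integrating~(2.3) over $\xi_{1}\in(a-\delta,a)$ and over $\xi_{2}\in(b,b+1)$ turns the unknown boundary terms into integrals of $\log\4{v}$ over intervals where $\4{v}$ is bounded above and below by Lemma~2.1, hence controlled. If you want to salvage your approach, the right move is to replace~(1.49) by the curvature identity and bring in~(1.47); as written, the proposal does not yield the lemma.
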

\begin{proof}
Let $\delta=(a-(T+\mu))/2$. By direct computation the scalar 
curvature $R$ in polar coordinates satisfies
\begin{align}
&R(e^{\tau\xi},\theta,t)=-\frac{(\log\4{v})_{\xi\xi}
+\tau^2(\log\4{v})_{\theta\theta}}{\4{v}}(\xi,\theta,\tau)
\nonumber\\
\Rightarrow\quad&(\log\4{v})_{\xi\xi}(\xi,\theta,\tau)
+\tau^2(\log\4{v})_{\theta\theta}(\xi,\theta,\tau)
=-R(e^{\tau\xi},\theta,t)\4{v}(\xi,\theta,\tau)
\end{align}
where $\tau=1/(T-t)$. Integrating (2.2) over $(\theta,\xi)\in 
[0,2\pi]\times [\xi_1,\xi_2]$, $T+\mu+\delta\le\xi_1
<\xi_2\le b+1$, by (1.47) and Lemma 2.1,
\begin{align}
\int_0^{2\pi}(\log\4{v})_{\xi}(\xi_2,\theta,\tau)\,d\theta
=&-\int_{\xi_1}^{\xi_2}\int_0^{2\pi}R(e^{\tau\xi},\theta,t)\4{v}
(\xi,\theta,\tau)\,d\theta\,d\xi\nonumber\\
&\qquad +\int_0^{2\pi}(\log\4{v})_{\xi}(\xi_1,\theta,\tau)
\,d\theta\nonumber\\
\le&C_1+\int_0^{2\pi}(\log\4{v})_{\xi}(\xi_1,\theta,\tau)
\,d\theta\quad\forall\tau>1/T
\end{align}
for some constant $C_1>0$.
Let $\tau_0>1/T$ be as given in Lemma 2.1. By Lemma 2.1 there
exists $\tau_1>\tau_0$ such that 
\begin{equation}
\xi(\tau)<T+\mu+\delta\quad\forall\tau\ge\tau_1.
\end{equation}
Integrating (2.3) over $\xi_1\in (a-\delta,a)$, by (2.4) and 
Lemma 2.1,
\begin{align}
\int_0^{2\pi}(\log\4{v})_{\xi}(\xi_2,\theta,\tau)\,d\theta
\le&\frac{1}{\delta}\left(C_1
+\left.\int_0^{2\pi}(\log\4{v})(\xi_1,\theta,\tau)
\,d\theta\right|_{\xi_1=a-\delta}^{\xi_1=a}\right)\nonumber\\
\le&\frac{C'}{\delta}\qquad\forall a\le\xi_2\le b,
\tau\ge\tau_1.
\end{align}
Integrating (2.3) over $\xi_2\in (b,b+1)$, by (2.4) and 
Lemma 2.1,
\begin{align}
\int_0^{2\pi}(\log\4{v})_{\xi}(\xi_1,\theta,\tau)\,d\theta
\ge&-C_1+\left.\int_0^{2\pi}(\log\4{v})(\xi_1,\theta,\tau)
\,d\theta\right|_{\xi_1=b}^{\xi_1=b+1}\nonumber\\
\ge&-C''\qquad\forall a\le\xi_1\le b,\tau\ge\tau_1.
\end{align}
By (2.5) and (2.6) we get (2.1) and the lemma follows.
\end{proof}

We now let
\begin{equation}
w(\xi,\theta,s)=\4{v}(\xi,\theta,\tau)
\end{equation}
with
$$
s=\log\tau=-\log (T-t).
$$
Then as in \cite{DS2} by (1.49),
\begin{equation}
w_s=e^{-s}(\log w)_{\xi\xi}+e^s(\log w)_{\theta\theta}
+\xi w_{\xi}+2w
\quad\mbox{ in }\R\times [0,2\pi]\times (-\log T,\infty).
\end{equation}

\begin{thm}
As $\tau\to\infty$, the function $\4{v}$ given by (1.48) converges 
to the function
\begin{equation*}
V(\xi)=\left\{\begin{aligned}
&\frac{2(T+\mu)}{\xi^2}\quad\forall\xi>T+\mu\\
&0\qquad\qquad\,\,\,\forall\xi<T+\mu.
\end{aligned}\right.
\end{equation*}
Moreover the convergence is uniform on $(-\infty,a]$ for any $a<T+\mu$ 
and on $[\xi_0,\xi_0']$ for any $\xi_0'>\xi_0>T+\mu$.
\end{thm}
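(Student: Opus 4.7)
The first half of the theorem, uniform convergence of $\4{v}$ to $0$ on $(-\infty,a]\times[0,2\pi]$ for any $a<T+\mu$, is exactly the content of Corollary~1.24, so no further work is required there. The plan for the outer region $[\xi_0,\xi_0']\subset(T+\mu,\infty)$ is a compactness-plus-uniqueness argument: along any sequence $\tau_k\to\infty$ I would extract a smooth subsequential limit of $\4{v}(\cdot,\cdot,\tau_k)$, show that every such limit must coincide with $V(\xi)=2(T+\mu)/\xi^2$, and conclude convergence of the full family.

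For the compactness, on any strip $[a,b]\subset(T+\mu,\infty)$ Lemma~2.1(ii)--(iii) provides a two-sided bound $C_2/b^2\le\4{v}(\xi,\theta,\tau)\le C_3/a^2$ for $\tau$ large enough that $\xi(\tau)<a$, while Lemma~2.2 controls $(\log\4{v})_\xi$ averaged in $\theta$. Combined with the bound on the scalar curvature coming from (1.47) and equation~(2.2), parabolic Schauder estimates applied to (1.49) produce uniform interior $C^\infty$ bounds. Arzel\`a--Ascoli and diagonalization then give a subsequence along which $\4{v}(\cdot,\cdot,\tau_k)\to V_*$ in $C^\infty_{\text{loc}}((T+\mu,\infty)\times[0,2\pi])$. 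Passing to the variable $s=\log\tau$ in (2.8), the coefficient $e^s$ multiplying $(\log w)_{\theta\theta}$ diverges while every other term in (2.8) stays bounded by the preceding estimates, forcing $(\log V_*)_{\theta\theta}\equiv 0$ and hence $V_*=V_*(\xi)$ by $\theta$-periodicity.

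To identify $V_*$, I would apply the same compactness argument to the shifted sequence $\4{v}(\cdot,\cdot,\tau_k e^\sigma)$ for $\sigma$ in a bounded interval. This gives a $\theta$-independent limit $W_\infty(\xi,\sigma)$ which, by passing to the limit in (2.8) (with $e^{-s}(\log w)_{\xi\xi}\to 0$ and $e^s(\log w)_{\theta\theta}\to 0$ as just argued), satisfies $W_\sigma=\xi W_\xi+2W$ on $(T+\mu,\infty)\times\R$. The uniform two-sided bound $C_2/\xi^2\le W_\infty\le C_3/\xi^2$ together with Corollary~1.24 applied at every $\sigma$ (so $W_\infty\equiv 0$ for $\xi<T+\mu$ at every time) rigidify $W_\infty$ to be stationary, and its positive stationary solutions are exactly $V_*(\xi)=C/\xi^2$. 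The constant is then pinned down by the mass identity $\int_0^{2\pi}\int_{-\infty}^\infty\4{v}(\xi,\theta,\tau)\,d\xi\,d\theta=4\pi$, which follows from (0.2) with $f\equiv 2$ and the change of variables (1.47)--(1.48). By Corollary~1.24 the mass on $(-\infty,T+\mu]$ vanishes in the limit, and by Lemma~2.1(iii) the tail mass on $[A,\infty)$ is bounded by $C/A$ uniformly in $\tau$; dominated convergence therefore gives $\int_0^{2\pi}\int_{T+\mu}^\infty C/\xi^2\,d\xi\,d\theta=4\pi$, forcing $C=2(T+\mu)$. Since every subsequential limit equals $V$, the full family $\4{v}(\cdot,\cdot,\tau)$ converges to $V$ in $C^\infty_{\text{loc}}$, and in particular uniformly on $[\xi_0,\xi_0']\times[0,2\pi]$.

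The main obstacle I expect is excluding genuinely $\sigma$-dependent limits $W_\infty$. The characteristics $\xi(\sigma)=\xi_0 e^{-\sigma}$ of $W_\sigma=\xi W_\xi+2W$ cross the line $\xi=T+\mu$ at finite $\sigma$, so characteristic propagation of nonzero data across this line would contradict the boundary value $W_\infty\equiv 0$ on $\xi<T+\mu$ required at every $\sigma$ by Corollary~1.24. Turning this incompatibility into a proof of stationarity is the technical heart of the argument; an alternative would be to mimic \cite{DS2} in constructing a Lyapunov-type monotone functional for the rescaled flow whose minimisers are precisely the profiles $C/\xi^2$.
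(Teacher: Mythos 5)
Your treatment of the inner region via Corollary~1.24 matches the paper exactly, so that part is fine. The outer region is where the proposal runs into trouble, and the two difficulties you yourself flag are precisely the ones the paper's proof is designed to circumvent rather than overcome directly.

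First, the compactness step as you set it up does not work. After the change of variables $s=\log\tau$, equation (2.8) reads $w_s=e^{-s}(\log w)_{\xi\xi}+e^{s}(\log w)_{\theta\theta}+\xi w_\xi+2w$, which degenerates as $s\to\infty$: the coefficient of $(\log w)_{\xi\xi}$ tends to zero while the coefficient of $(\log w)_{\theta\theta}$ blows up. This is not a uniformly parabolic family, so parabolic Schauder estimates do not yield uniform $C^\infty_{\mathrm{loc}}$ bounds on $w_k$, and the appeal to Arzel\`a--Ascoli in $C^\infty_{\mathrm{loc}}$ is unjustified. Relatedly, the argument that $e^s\to\infty$ ``forces'' $(\log V_*)_{\theta\theta}\equiv0$ is only heuristic: nothing prevents $(\log w_k)_{\theta\theta}$ from decaying at exactly the compensating rate. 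Second, you correctly identify that ruling out genuinely $\sigma$-dependent limits of the transport equation $W_\sigma=\xi W_\xi+2W$ is the crux; the characteristic-crossing heuristic is suggestive but, as written, is not a proof of stationarity, and you leave it as an open obstacle.

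The paper avoids both problems by never attempting $C^\infty$ compactness of $w_k$ itself. Instead it integrates out $\theta$ and works with the cumulative mass $W_k^b(\eta,s)=\int_\eta^b\int_0^{2\pi}w_k\,d\theta\,d\xi$. Lemmas~2.1 and 2.2 give uniform $C^0$ and Lipschitz bounds on $W_k^b$ (estimates (2.14), (2.16)), so Ascoli applies to the scalar functions $W_k^b$, not to $w_k$. The identification of the limit is then done not by analysing a limiting PDE but by the exact scaling computation (2.23)--(2.26): the quantity $e^{-s}W_k^b(e^{-s}\zeta_0,s)$ has $s$-derivative of size $O(e^{-(2s+s_k)})+O(e^{-s}/b)$, so in the limit $k\to\infty$, $b\to\infty$ it is constant along the characteristics $\eta=e^{-s}\zeta_0$. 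Combined with the mass boundary condition $W((T+\mu)^+,s)=4\pi$ (which comes from Corollary~1.24 and the conservation law (2.11)), this pins down $W(\eta,s)=4\pi(T+\mu)/\eta$ without any stationarity argument. From there the pointwise identification $w_k\to 2(T+\mu)/\xi^2$ goes through $L^p$ convergence and the auxiliary function $Z_k(\xi,s)=\int_0^{2\pi}\log w_k\,d\theta$ (Claim 2), and the upgrade to uniform convergence in $(\xi,\theta)$ crucially uses the monotonicity Lemma~1.1 via estimate (2.33) -- a step absent from your proposal. So the structure is genuinely different: where you try to extract a limiting profile by elliptic/parabolic regularity and then classify solutions of a limit equation, the paper extracts a limiting mass distribution by soft compactness, identifies it by a conservation-law/scaling identity, and only then recovers the pointwise profile.
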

\begin{proof}
By Corollary 1.24 we only need to prove the convergence of the function
$\4{v}$ to $2(T+\mu)/\xi^2$ for $\xi>T+\mu$. Let $\tau_0$, $\tau_1$, be
given by Lemma 2.1 and Lemma 2.2 respectively. Let $s_0
=\max(\log\tau_0,\log\tau_1)$ and $\{s_k\}_{k=1}^{\infty}$ be a 
sequence such that $s_k\to\infty$ as $k\to\infty$. Let
\begin{equation}
w_k(\xi,\theta,s)=w(\xi,\theta,s+s_k)\quad\forall\xi\in\R,
0\le\theta\le 2\pi,s\ge-\log T-s_k.
\end{equation}
Then by (2.8),
\begin{equation}
w_{k,s}=e^{-(s+s_k)}(\log w_k)_{\xi\xi}
+e^{s+s_k}(\log w_k)_{\theta\theta}+\xi w_{k,\xi}+2w_k
\end{equation}
in $\R\times [0,2\pi]\times (-\log T-s_k,\infty)$. Since 
$$
\int_{\R^2}u(x,t)\,dx=4\pi (T-t)\quad\forall 0<t<T, 
$$
by (1.48), (2.7) and (2.9),
\begin{equation}
\int_{-\infty}^{\infty}\int_0^{2\pi}w_k(\xi,\theta,s)\,
d\theta\,d\xi=4\pi\quad\forall s>-\log T-s_k,k\in\Z^+.
\end{equation}
Let
\begin{equation}
W_k^b(\eta,s)=\int_{\eta}^b\int_0^{2\pi}w_k(\xi,\theta,s)
\,d\theta\,d\xi\quad\forall b\ge\eta>T+\mu,s>-\log T-s_k
,k\in\Z^+
\end{equation}
and
\begin{equation*}
W_k(\eta,s)=\int_{\eta}^{\infty}\int_0^{2\pi}w_k(\xi,\theta,s)
\,d\theta\,d\xi\quad\forall \eta>T+\mu,s>-\log T-s_k,k\in\Z^+.
\end{equation*}
Then 
\begin{equation}
(W_k^b)_{\eta}(\eta,s)=-\int_0^{2\pi}w_k(\eta,\theta,s)
\,d\theta\quad\forall b\ge\eta>T+\mu,s>-\log T-s_k,k\in\Z^+.
\end{equation}
Hence by (2.11) and Lemma 2.1 there exists a constant $C>0$ such 
that
\begin{equation}
|W_k^b(\eta,s)|,|(W_k^b)_{\eta}(\eta,s)|\le C\quad\forall 
b\ge\eta>T+\mu,s>s_0-s_k,k\in\Z^+.
\end{equation}
Now by (2.10), (2.12) and (2.13),
\begin{align}
(W_k^b)_s(\eta,s)=&e^{-(s+s_k)}\int_{\eta}^b\int_0^{2\pi}(\log w_k)_{\xi\xi}
(\xi,\theta,s)\,d\theta\,d\xi+\int_{\eta}^b\int_0^{2\pi}\xi 
w_{k,\xi}(\xi,\theta,s)\,d\theta\,d\xi\nonumber\\
&\qquad+2W_k^b\nonumber\\
=&e^{-(s+s_k)}\int_0^{2\pi}(\log w_k)_{\xi}
(b,\theta,s)\,d\theta-e^{-(s+s_k)}\int_0^{2\pi}(\log w_k)_{\xi}
(\eta,\theta,s)\,d\theta\nonumber\\
&\qquad +b\int_0^{2\pi}w_k(b,\theta,s)\,d\theta
-\eta\int_0^{2\pi}w_k(\eta,\theta,s)\,d\theta
+W_k^b\nonumber\\
=&e^{-(s+s_k)}\int_0^{2\pi}(\log w_k)_{\xi}
(b,\theta,s)\,d\theta-e^{-(s+s_k)}\int_0^{2\pi}(\log w_k)_{\xi}
(\eta,\theta,s)\,d\theta\nonumber\\
&\qquad +b\int_0^{2\pi}w_k(b,\theta,s)\,d\theta
+\eta(W_k^b)_{\eta}(\eta,s)+W_k^b.
\end{align}
By (2.14), Lemma 2.1, and Lemma 2.2 for any $a\in (T+\mu,b]$ there 
exists a constant $C_{a,b}>0$ such that
\begin{equation}
|(W_k^b)_s(\eta,s)|\le C_{a,b}(1+b^{-1}+\eta)\quad\forall a\le\eta
\le b,s>s_0-s_k,k\in\Z^+.
\end{equation}
By (2.14) and (2.16) the sequence $\{W_k^b\}_{k=1}^{\infty}$ is equi-Holder
continuous on $[a,b]\times [-s',\infty]$ for any $a\in (T+\mu,b]$, $s'>0$, 
and $k$ large such that $s_k-s_0>s'$. 

We choose a sequence $\{b_i\}_{i=1}^{\infty}$ of monotonically increasing 
sequence such that $b_i>T+\mu$ 
for any $i\in\Z^+$ and $b_i\to\infty$ as $i\to\infty$. By the Ascoli 
theorem and a diagonalization argument the sequence 
$\{W_k^{b_1}\}_{k=1}^{\infty}$ has a subsequence 
$\{W_{j_{1,k}}^{b_1}\}_{k=1}^{\infty}$ such that $W_{j_{1,k}}^{b_1}$ 
converges uniformly on every compact subset of $(T+\mu,b_1]\times 
(-\infty,\infty)$ to some function $W^{b_1}\in C((T+\mu,b_1]\times 
(-\infty,\infty))$ as $k\to\infty$ .

Similarly the sequence $\{W_{j_{1,k}}^{b_2}\}_{k=1}^{\infty}$ has 
a subsequence $\{W_{j_{2,k}}^{b_2}\}_{k=1}^{\infty}$ with $j_{1,1}<j_{2,1}$ 
such that $W_{j_{2,k}}^{b_2}$ converges uniformly on every compact 
subset of $(T+\mu,b_2]\times (-\infty,\infty)$ to some function
$W^{b_2}\in C((T+\mu,b_2]\times (-\infty,\infty))$ as $k\to\infty$ .
Repeating the above argument for any $i\ge 2$ the sequence 
$\{W_{j_{i-1,k}}^{b_i}\}_{k=1}^{\infty}$ has a subsequence 
$\{W_{j_{i,k}}^{b_i}\}_{k=1}^{\infty}$ with $j_{i-1,1}<j_{i,1}$ 
such that $W_{j_{i,k}}^{b_i}$ converges uniformly on every compact 
subset of $(T+\mu,b_i]\times (-\infty,\infty)$ to some function 
$W^{b_i}\in C((T+\mu,b_i]\times (-\infty,\infty))$ as $k\to\infty$.

For any $k\in\Z^+$ let $j_k=j_{k,1}$. Then $\{j_k\}_{k=1}^{\infty}$ 
is a subsequence
of $\{j_{i,k}\}_{k=1}^{\infty}$ for any $i\in\Z^+$. Hence for any 
$i\in\Z^+$, the sequence $\{W_{j_k}^{b_i}\}_{k=1}^{\infty}$ 
converges uniformly on every compact subset of $(T+\mu,b_i]\times 
(-\infty,\infty)$ to $W^{b_i}$ as $k\to\infty$. Thus we may assume 
without loss of generality that for any $i\in\Z^+$ 
$\{W_k^{b_i}\}_{k=1}^{\infty}$ converges uniformly on every compact 
subset of $(T+\mu,b_i]\times (-\infty,\infty)$ to $W^{b_i}$ as 
$k\to\infty$. 

Now by Lemma 2.1 for each $s\in\R$, $w_k(\cdot,\cdot,s)$ will have 
a subsequence which we may assume without loss of generality to be 
the sequence itself that converges weakly in $L^{\infty}(K)$ to some 
non-negative function $\2{w}(\cdot,\cdot,s)$ as $k\to\infty$
for any compact set $K\subset [T+\mu,\infty)\times [0,2\pi]$.
Putting $b=b_i$ and letting $k\to\infty$ in (2.12),
\begin{equation}
W^{b_i}(\eta,s)=\int_{\eta}^{b_i}\int_0^{2\pi}\2{w}(\xi,\theta,s)
\,d\theta\,d\xi\quad\forall b\ge\eta\ge T+\mu,s\in\R,i\in\Z^+.
\end{equation}
By (2.17) as $i\to\infty$, $W^{b_i}$ will increase monotonically to the 
function,
\begin{equation}
W(\eta,s):=\int_{\eta}^{\infty}\int_0^{2\pi}\2{w}(\xi,\theta,s)
\,d\theta\,d\xi\quad\forall \eta\ge T+\mu,s\in\R.
\end{equation}
By Lemma 2.1,
\begin{equation}
\2{w}(\xi,\theta,s)\le\frac{C}{\xi^2}\quad\forall\xi>0,\theta
\in [0,2\pi],s\in\R.
\end{equation}
Then by (2.19),
\begin{equation}
0\le W(\eta,s)-W^{b_i}(\eta,s)\le\int_{b_i}^{\infty}\int_0^{2\pi}\2{w}
(\xi,\theta,s)\,d\theta\,d\xi\le\int_{b_i}^{\infty}\int_0^{2\pi}
\frac{C}{\xi^2}\,d\theta\,d\xi\le\frac{C'}{b_i}
\end{equation}
holds for any $s\in\R$ and $i\in\Z^+$.

\noindent $\underline{\text{\bf Claim 1}}$: $W_k$ converges uniformly 
to $W$ on every compact set $K\subset 
(T+\mu,\infty)\times (-\infty,\infty)$ as $k\to\infty$. 

{\ni{\it Proof of Claim 1}:} Let $K$ be a compact subset of 
$(T+\mu,\infty)\times (-\infty,\infty)$. By Lemma 2.1, (2.19) and (2.20),
\begin{align}
\|W_k-W\|_{L^{\infty}(K)}\le&\|W_k-W_k^{b_i}\|_{L^{\infty}(K)}
+\|W_k^{b_i}-W^{b_i}\|_{L^{\infty}(K)}
+\|W^{b_i}-W\|_{L^{\infty}(K)}\nonumber\\
\le&\int_{b_i}^{\infty}\int_0^{2\pi}\frac{C}{\xi^2}\,d\theta\,d\xi
+\|W_k^{b_i}-W^{b_i}\|_{L^{\infty}(K)}+\frac{C'}{b_i}\nonumber\\
\le&\frac{C''}{b_i}+\|W_k^{b_i}-W^{b_i}\|_{L^{\infty}(K)}\qquad\qquad\forall
i\in\Z^+.
\end{align}
Letting $k\to\infty$ in (2.21),
\begin{align*}
&\limsup_{k\to\infty}\|W_k-W\|_{L^{\infty}(K)}
\le\frac{C''}{b_i}\quad\forall i\in\Z^+\\
\Rightarrow\quad&\lim_{k\to\infty}\|W_k-W\|_{L^{\infty}(K)}
=0\quad\mbox{ as }i\to\infty
\end{align*}
and Claim 1 follows. 

By Corollary 1.24, Lemma 2.1, (2.7), (2.9), (2.11) and Claim 1,
\begin{equation}
W((T+\mu)^+,s)=4\pi\quad\forall s\in\R.
\end{equation}
Let $a_2>a_1>T+\mu$ and $s_1'<s_2'$.
We choose $k_0\in\Z^+$ such that $s_1'>s_0-s_k$ for any $k\ge k_0$. 
Then by (2.15) for any $e^{-s}\zeta_0\in [a_1,a_2]$, $b>a_2$, $s_2'
\ge s\ge s_1'$ and $k\ge k_0$,
\begin{align}
&\frac{d}{ds}(e^{-s}W_k^b(e^{-s}\zeta_0,s))\nonumber\\
=&e^{-s}(W_{k,s}^b(e^{-s}\zeta_0,s)-e^{-s}\zeta_0W_{k,\eta}^b
(e^{-s}\zeta_0,s)-W_k^b(e^{-s}\zeta_0,s))\nonumber\\
=&e^{-(2s+s_k)}\int_0^{2\pi}(\log w_k)_{\xi}(b,\theta,s)\,d\theta
-e^{-(2s+s_k)}\int_0^{2\pi}(\log w_k)_{\xi}(e^{-s}\zeta_0,\theta,s)
\,d\theta\nonumber\\
&\qquad +be^{-s}\int_0^{2\pi}w_k(b,\theta,s)\,d\theta
\end{align}
By Lemma 2.1 and Lemma 2.2,
\begin{equation}
\left|\frac{d}{ds}(e^{-s}W_k^b(e^{-s}\zeta_0,s))\right|
\le C_be^{-(2s+s_k)}+C\frac{e^{-s}}{b}
\end{equation}
for some constants $C_b>0$ depending on $b$ and $C>0$. By (2.24) for 
any $s_2'>s_1'>s_0-s_k$, $\zeta_0>e^{s_2'}(T+\mu)$ and $b>e^{-s_1'}
\zeta_0$,
\begin{align}
|e^{-s_1'}W_k^b(e^{-s_1'}\zeta_0,s_1')-e^{-s_2'}W_k^b(e^{-s_2'}
\zeta_0,s_2')|
=&\left|\int_{s_1'}^{s_2'}\frac{d}{ds}(e^{-s}W_k^b(e^{-s}\zeta_0,s))
\,ds\right|\nonumber\\
\le&\max_{s_1'\le s\le s_2'}\left|\frac{d}{ds}(e^{-s}W_k^b
(e^{-s}\zeta_0,s))\right|(s_2'-s_1')\nonumber\\
\le&\left(C_be^{-(2s_1'+s_k)}+C\frac{e^{-s_1'}}{b}\right)(s_2'-s_1')
\end{align}
Putting $b=b_i$ and letting first $k\to\infty$ and then $i\to\infty$ 
in (2.25),
\begin{equation}
e^{-s_1'}W(e^{-s_1'}\zeta_0,s_1')=e^{-s_2'}W(e^{-s_2'}\zeta_0,
s_2')
\end{equation}
holds for any $s_1',s_2'\in\R$ and $\zeta_0>\max(e^{s_1'},e^{s_2'})
(T+\mu)$. 

Let $\eta,\2{\eta}>T+\mu$ and $s\in\R$. Let 
$\zeta_0=e^s\eta$ and choose $\2{s}$ such that $\2{\eta}
=e^{-\2{s}}\zeta_0$. Then $\eta/\2{\eta}=e^{\2{s}-s}$. Hence
by (2.26),
\begin{equation}
\eta W(\eta ,s)=\2{\eta}W(\2{\eta},\2{s})\quad\forall
\eta,\2{\eta}>T+\mu,s,\2{s}\in\R.
\end{equation}
Letting $\2{\eta}\to T+\mu$ in (2.27), by (2.22) we get
\begin{equation}
W(\eta ,s)=\frac{4\pi(T+\mu)}{\eta}\quad\forall
\eta>T+\mu,s\in\R.
\end{equation}
We now fix $s\in\R$. By (2.28) and an argument similar to the proof
on P.588 of \cite{DS2} $w_k(\xi,\theta,s)$ converges to $2(T+\mu)/\xi^2$ in 
$L^p([\xi_0,\infty)\times [0,2\pi])$ as $k\to\infty$ for any $p\ge 1$ 
and $\xi_0>T+\mu$. Hence by passing to a subsequence we may assume 
without loss of generality that
\begin{equation}
w_k(\xi,\theta,s)\to\frac{2(T+\mu)}{\xi^2}\quad\mbox{ a.e. }
(\xi,\theta)\in (T+\mu,\infty)\times [0,2\pi]\quad\mbox{ as }k
\to\infty.
\end{equation}
Let 
$$
Z_k(\xi,s)=\int_0^{2\pi}\log w_k(\xi,\theta,s)\,d\theta.
$$
\noindent $\underline{\text{\bf Claim 2}}$: For any $s\in\R$ the 
sequence $Z_k(\xi,s)$ has a subsequence that converges uniformly 
on $[a,b]$ for any $b>a>T+\mu$ to 
\begin{equation}
Z(\xi,s)=2\pi\log\frac{2(T+\mu)}{\xi^2}.
\end{equation}

{\ni{\it Proof of Claim 2}:}
We will use a modification of the proof of Lemma 4.10 of \cite{DS2} to 
prove the claim. By Lemma 2.1 and Lemma 2.2 for any $b>a>T+\mu$ there 
exist constants $C>0$ and $k_1=k_1(s)\in\Z^+$ such that
\begin{equation}
\left\{\begin{aligned}
&|Z_{k,\xi}(\xi,s)|=\left|\int_0^{2\pi}(\log w_k(\xi,\theta,s))_{\xi}
\,d\theta\right|\le C\quad\forall a\le\xi\le b,k\ge k_1\\
&|Z_k(\xi,s)|\le C\qquad\qquad\qquad\qquad\qquad\qquad
\quad\forall a\le\xi\le b,k\ge k_1.
\end{aligned}\right.
\end{equation}
By (2.31), the Ascoli theorem, and a diagonalization argument 
the sequence $Z_k(\xi,\theta,s)$ has a subsequence that converges uniformly 
on $[a,b]$ for any $b>a>T+\mu$ to some function $Z(\xi,s)$ which is
continuous in $\xi>T+\mu$. Without loss of generality we may assume 
that $Z_k(\xi,s)$ converges uniformly on $[a,b]$ for any $b>a>T+\mu$ 
to $Z(\xi,s)$ as $k\to\infty$. Since
$$
\int_{\eta}^{\eta+A}\int_0^{2\pi}\log w_k(\xi,\theta,s)\,d\theta
\,d\xi\to\int_{\eta}^{\eta+A}Z(\xi,s)\,d\xi\quad\forall\eta>T+\mu,
A>0
\quad\mbox{ as }k\to\infty,
$$
by (2.29), Lemma 2.1, and the Lebesgue dominated convergence theorem,
\begin{equation}
\int_{\eta}^{\eta+A}Z(\xi,s)\,d\xi
=\int_{\eta}^{\eta+A}\int_0^{2\pi}\log\frac{2(T+\mu)}{\xi^2}\,d\theta
\,d\xi
=2\pi\int_{\eta}^{\eta+A}\log\frac{2(T+\mu)}{\xi^2}\,d\xi
\end{equation}
holds for any $\eta>T+\mu$, $A>0$. Dividing both sides of (2.32) by $A$
and letting $A\to 0$ we get (2.30) and Claim 2 follows.

By Lemma 1.1,
\begin{align}
w_k(\eta,\theta,s)=&\4{v}(\eta,\theta,\tau_k(s))
=\tau_k(s)^2v(\tau_k(s)\eta,\theta,\tau_k(s))\nonumber\\
=&\tau_k(s)^2e^{2\tau_k(s)\eta}u(e^{\tau_k(s)\eta},\theta,t_k(s))
\nonumber\\
\le&\tau_k(s)^2e^{2\tau_k(s)\eta}u(e^{\tau_k(s)\eta}-2r_2,\theta,t_k(s))
\nonumber\\
=&\tau_k(s)^2e^{2\tau_k(s)\eta}u(e^{\tau_k(s)\eta}(1-2r_2e^{-\tau_k(s)\eta}),
\theta,t_k(s))\nonumber\\
=&\frac{\tau_k(s)^2}{(1-2r_2e^{-\tau_k(s)\eta})^2}v(\tau_k(s)\eta
+\log (1-2r_2e^{-\tau_k(s)\eta}),\theta,t_k(s))\nonumber\\
=&(1-2r_2e^{-\tau_k(s)\eta})^{-2}\4{v}
(\eta+\tau_k(s)^{-1}\log (1-2r_2e^{-\tau_k(s)\eta}),\theta,\tau_k(s))
\nonumber\\
=&(1-2r_2e^{-\tau_k(s)\eta})^{-2}w_k
(\eta+\tau_k(s)^{-1}\log (1-2r_2e^{-\tau_k(s)\eta}),\theta,s)\nonumber\\
\end{align}
for all $k$ large satisfying $e^{\tau_k(s)\eta}>2r_2$ where
$$
\tau_k(s)=e^{s+s_k}\quad\mbox{ and }t_k(s)=T-\tau_k(s)^{-1}.
$$
By Claim 2, (2.33), and an argument similar to the proof on P.590 of
\cite{DS2}, $w_k(\eta,\theta,s)$ converges uniformly 
on every compact subset of $(T+\mu,\infty)\times [0,2\pi]$ to 
\begin{equation*}
\frac{2(T+\mu)}{\eta^2}
\end{equation*}
as $k\to\infty$. Since the sequence $s_k$ is arbitrary, 
$\4{v}(\eta,\theta,\tau)$ converges unfiormly to $2(T+\mu)/\eta^2$ on 
every compact subset of $(T+\mu,\infty)\times [0,2\pi]$ as 
$\tau\to\infty$ and the theorem follows.
\end{proof}


\begin{thebibliography}{99}

\bibitem[CF]{CF} D.G.~Aronson and L.A.~Caffarelli, {\em The initial trace of
a solution of the porous medium equation}, Transactions A.M.S. 280 
(1983), no. 1, 351--366.  

\bibitem[DH]{DH} P.~Daskalopoulos and R.~Hamilton, {\em Geometric estimates
for the logarithmic fast diffusion equation}, Comm. Anal. Geom. 12
(2004), nos. 1--2, 143--164.

\bibitem[DP1]{DP1} P.~Daskalopoulos and M.A.~del Pino, {\em On a singular 
diffusion equation}, Comm. Anal. Geom. 3 (1995), no. 3, 523--542.

\bibitem[DP2]{DP2} P.~Daskalopoulos and M.A.~del Pino, {\em Type II 
collapsing of maximal solutions to the Ricci flow in $\R^2$}, Ann. Inst. 
H. Poincar\'e Anal. Non Linaire 24 (2007), 851--874. 

\bibitem[DS1]{DS1} P.~Daskalopoulos and N.~Sesum, {\em Eternal solutions to 
the Ricci flow on $\R^2$}, Int. Math. Res. Not. 2006, Art. ID 83610, 20 pp.

\bibitem[DS2]{DS2} P.~Daskalopoulos and N.~Sesum, {\em Type II extinction profile of maximal solutions to the Ricci flow equation}, 
J. Geom. Anal. 20 (2010), no. 3, 565--591.

\bibitem[ERV1]{ERV1} J.R.~Esteban, A.~Rodriguez and J.L.~Vazquez, 
{\em The fast diffusion equation with logarithmic nonlinearity and the evolution of conformal metrics in the plane}, Advances in Differential Equations 1 (1996), no. 1, 21--50.

\bibitem[ERV2]{ERV2} J.R.~Esteban, A.~Rodriguez and J.L.~Vazquez, {\em The maximal solution of the logarithmic fast diffusion equation in two space dimensions}, Advances in Differential Equations 2 (1997), no. 6, 867--894. 

\bibitem[G]{G} P.G.~de Gennes, {\em Wetting: statics and dynamics}, Rev. 
Modern Phys. 57 (1985), no. 3, 827--863.  

\bibitem[HY]{HY} R.~Hamilton and S.T.~Yau, {\em The Harnack estimate for the 
Ricci flow - revisited}, Asian J. Math 1 (1997), no. 3, 418--421.

\bibitem[Hs1]{Hs1} S.Y.~Hsu, {\em Large time behaviour of solutions of the Ricci flow equation on $R^2$}, Pacific J. Math 197 (2001), no. 1, 25--41.

\bibitem[Hs2]{Hs2} S.Y.~Hsu, {\em Asymptotic profile of a singular diffusion 
equation as $t\to\infty$}, Nonlinear Analysis TMA 48 (2002), no. 6, 781--790.  

\bibitem[Hs3]{Hs3} S.Y.~Hsu, {\em Asymptotic behaviour of solutions of the equation $u_t=\Delta\log u$ near the extinction time}, Advances in 
Differential Equations 8 (2003), no. 2, 161--187.

\bibitem[Hs4]{Hs4} S.Y.~Hsu, {\em Behaviour of solutions of a singular diffusion equation near the extinction time}, Nonlinear Analysis TMA 56 
(2004), no. 1, 63--104.

\bibitem[Hu1]{Hu1} K.M.~Hui, {\em Existence of solutions of the equation 
$u_t=\Delta\log u$}, Nonlinear Analysis, TMA 37 (1999), no. 7, 875--914. 

\bibitem[Hu2]{Hu2} K.M.~Hui, {\em Singular limit of solutions of the equation 
$u_t=\Delta (u^m/m)$ as $m\to 0$}, Pacific J. Math. 187 (1999), no. 2, 
297--316.

\bibitem[K]{K} J.R.~King, {\em Self-similar behaviour for the equation of fast 
nonlinear diffusion}, Phil. Trans. Royal Soc. London, Series A 343
(1993), 337--375.

\bibitem[LSU]{LSU} O.A.~Ladyzenskaya, V.A.~Solonnikov, and
N.N.~Uraltceva, {\em Linear and quasilinear equations of
parabolic type}, Transl. Math. Mono. Vol 23, Amer. Math. Soc., 
Providence, R.I., U.S.A., 1968.

\bibitem[V]{V} J.L.~Vazquez, {\em Nonexistence of solutions for nonlinear 
heat equations of fast-diffusion type}, J. Math. Pures Appl. 71
(1992), 503--526.

\bibitem[W1]{W1} L.F.~Wu, {\em A new result for the porous medium equation 
derived from the Ricci flow}, Bull. Amer. Math. Soc. 28 (1993), 90--94.

\bibitem[W2]{W2} L.F.~Wu, {\em The Ricci flow on $R^2$}, Comm. Anal. Geom. 
1 (1993), 439--472.

\end{thebibliography}
\end{document}